\documentclass[a4paper, 11pt]{article}
\usepackage[utf8]{inputenc}
\usepackage[english]{babel}
\usepackage{amssymb,amsmath,amsthm,amsfonts}

\usepackage{indentfirst}
\usepackage{comment}
\usepackage{enumerate}
\usepackage{enumitem}

\usepackage[pdftex]{graphicx}
\graphicspath{{pictures/}}
\DeclareGraphicsExtensions{.pdf,.png,.jpg}

\usepackage{geometry}
\geometry{a4paper,
 left=20mm,
 right=20mm,
 top=24mm,
 bottom=24mm}

\newtheorem{theorem}{Theorem}
\newtheorem{lemma}[theorem]{Lemma}

\newtheorem{remark}{Remark}

\DeclareMathOperator*{\argmin}{argmin}
\DeclareMathOperator*{\prox}{prox}

\DeclareMathOperator*{\proj}{proj}
\DeclareMathOperator*{\Id}{Id}

\DeclareMathOperator*{\gra}{gra}

\usepackage{float} 
\usepackage{caption}
\usepackage{subcaption}

\usepackage[usenames]{color}
\usepackage{colortbl}

\begin{document}

\title{A fast continuous time approach for non-smooth convex optimization using Tikhonov regularization technique}

\author{Mikhail A. Karapetyants 
\footnote{Faculty of Mathematics, University of Vienna, Oskar-Morgenstern-Platz 1, 1090 Vienna, Austria, 
{email: \tt mikhail.karapetyants@univie.ac.at.} Research supported by the Doctoral Programme \emph{Vienna Graduate School on Computational Optimization (VGSCO)} which is funded by FWF (Austrian Science Fund), project W 1260.} 
}

\maketitle

\begin{abstract}

In this manuscript we would like to address the classical optimization problem of minimizing a proper, convex and lower semicontinuous function via the second order in time dynamics, combining viscous and Hessian-driven damping with a Tikhonov regularization technique. In our analysis we heavily exploit the Moreau envelope of the objective function and its properties as well as Tikhonov properties, which we extend to a nonsmooth case. We introduce the setting, which at the same time guarantees the fast convergence of the function (and Moreau envelope) values and strong convergence of the trajectories of the system to a minimal norm solution -- the element of the minimal norm of all the minimizers of the objective. Moreover, we deduce the precise rates of convergence of the values for the particular choice of parameter function. Various numerical examples are also included as an illustration of the theoretical results.

\smallskip
{\em Key words}: Nonsmooth convex optimization;  Damped inertial dynamics;  Hessian-driven damping; Moreau envelope; Proximal operator; Tikhonov regularization.

\smallskip
{\em AMS subject classification}: 37N40, 46N10, 49M99, 65K05, 65K10, 90C25.

\end{abstract} 

\section{Introduction}

In the Hilbert framework $H$, $\langle \cdot, \cdot \rangle$, $ \| \cdot \| = \sqrt{\langle \cdot, \cdot \rangle} $, we study the convergence properties of the second order in time differential equation
\begin{equation}\label{Syst_0}
    \ddot x(t) + \alpha \sqrt{\varepsilon(t)} \dot x(t) + \beta \frac{d}{dt} \left( \nabla \Phi_{\lambda(t)} (x(t)) \right) + \nabla \Phi_{\lambda(t)} (x(t)) + \varepsilon(t) x(t) = 0 \text{ for } t \geq t_0,
\end{equation}
where the initial conditions are $x(t_0) = x_0 \in H$ and $\dot x(t_0) = x_1 \in H$ and $ \alpha, \beta \text{ and } t_0 > 0 $, $ \Phi: H \longrightarrow \overline{\mathbb{R}} = \mathbb{R} \cup \{ \pm \infty \} $ is a proper, convex and lower semicontinuous function and $\Phi_\lambda$ is its Moreau envelope of the index $\lambda > 0$. The function $\lambda: [t_0, +\infty) \longrightarrow \mathbb{R}_+$ is assumed to be continuously differentiable and nondecreasing while the function $\varepsilon: [t_0, +\infty) \longrightarrow \mathbb{R}_+$ is continuously differentiable and nonincreasing with the property $\lim_{t \to +\infty} \varepsilon(t) = 0$. In addition, we assume that $\argmin \Phi$, which is the set of global minimizers of $\Phi$, is not empty and denote by $\Phi^*$ the optimal objective value of $\Phi$. Finally, for every $t \geq t_0$ let us introduce the strongly convex function $\varphi_{\varepsilon(t), \lambda(t)}: H \longrightarrow \mathbb{R}$ is defined as $\varphi_{\varepsilon(t), \lambda(t)} (x) = \Phi_{\lambda(t)}(x) + \frac{\varepsilon(t) \| x \|^2}{2}$. Let us denote the unique minimizer of $\varphi_{\varepsilon(t), \lambda(t)}$ as $x_{\varepsilon(t), \lambda(t)} = \argmin_{H} \varphi_{\varepsilon(t), \lambda(t)}$. The system \eqref{Syst_0} has a connection to the minimization problem
\[
\min_{x \in H} \Phi(x)
\]
of a proper, convex and lower semicontinuous function $\Phi$.

The main goal of the research is to provide the setting in a nonsmooth case, where we would have fast convergence of the function values combined with strong convergence of the trajectories -- the solution of \eqref{Syst_0} -- to the element of the minimal norm from the set of all minimizers of the objective function. This analysis is an extrapolation of the one conducted in \cite{ABCR_0} to the case of a nonsmooth objective function. We also aim to provide the exact rates of convergence of the values for the polynomial choice of the smoothing parameter $\lambda$ and Tikhonov function $\varepsilon$. As a conclusion, multiple numerical experiments were conducted allowing better understanding of the theoretical results.

\subsection{Nonsmooth optimization}

The Moreau envelope plays a significant role in nonsmooth optimization. It is defined as ($\Phi : H \to \overline{\mathbb{R}}$ is a proper, convex and lower semicontinuous function) 
\begin{equation*}
\Phi_\lambda: H \to \mathbb{R}, \quad \Phi_{\lambda} (x) \ = \ \inf_{y \in H} \left\{ \Phi(y) + \frac{1}{2 \lambda} \| x - y \|^2 \right\},
\end{equation*}
where $\lambda > 0$. $\Phi_\lambda$ is convex and continuously differentiable with
\begin{align}\label{Morprox}
\nabla \Phi_{\lambda} (x) = \frac{1}{\lambda} ( x - \prox\nolimits_{\lambda \Phi} (x)) \quad \forall x \in H,
\end{align}
and $\nabla \Phi_\lambda$ is $\frac{1}{\lambda}$-Lipschitz continuous. Here,
\[
\prox\nolimits_{\lambda \Phi}: H \to H, \quad \prox\nolimits_{\lambda \Phi} (x) = \argmin_{y \in H} \left\{ \Phi(y) + \frac{1}{2 \lambda} \| x - y \|^2 \right\}
\]
denotes the proximal operator of $\Phi$ of parameter $\lambda$. Moreover (see \cite{AC}),
\begin{equation}\label{deriv}
    \frac{d}{dt} \Phi_{\lambda(t)} (x) = - \frac{\dot \lambda(t)}{2} \| \nabla \Phi_{\lambda(t)} (x) \|^2 \ \forall x \in H.
\end{equation}

The work \cite{AL} by Attouch-L\'aszl\'o serves as a starting point for a lot of different research topics in nonsmooth optimization. The following dynamics was considered
\begin{equation}\label{NSystHess}
\ddot x(t) + \frac{\alpha}{t} \dot x(t) + \beta \frac{d}{dt} \nabla \Phi_{\lambda(t)}(x(t)) + \nabla \Phi_{\lambda(t)}(x(t)) = 0
\end{equation}
where $\alpha > 1$ and $\beta > 0$, and the term $\frac{d}{dt} \nabla \Phi_{\lambda(t)}(x(t))$ is inspired by the Hessian driven damping term in the case of smooth functions. For this system multiple fundamental results were proven, such as convergence rates for the Moreau envelope values as well as for the velocity of the system 
\[
\Phi_{\lambda(t)}(x(t)) - \Phi^* = o\left( \frac{1}{t^2} \right) \text{ and } \| \dot x(t) \| = o\left( \frac{1}{t} \right) \text{ as } t \to +\infty,
\]
from where convergence rates for the  $\Phi$ along the trajectories themselves were deduced
\[
\Phi \big( \prox\nolimits_{\lambda(t) \Phi} (x(t)) \big) - \Phi^* = o\left( \frac{1}{t^2} \right) \text{ and } \| \prox\nolimits_{\lambda(t) \Phi} (x(t)) - x(t) \| = o\left( \frac{\sqrt{\lambda(t)}}{t} \right) \text{ as } t \to +\infty.
\]
In addition, convergence rates for the gradient of the Moreau envelope of parameter $\lambda(t)$ and its time derivative along $x(t)$ were established
\[
\| \nabla \Phi_{\lambda(t)} (x(t)) \| = o\left( \frac{1}{t^2} \right) \text{ and } \left\| \frac{d}{dt} \nabla \Phi_{\lambda(t)} (x(t)) \right\| = o\left( \frac{1}{t^2} \right) \text{ as } t \to +\infty.
\]
Moreover, the weak convergence of the trajectories $x(t)$ to a minimizer of $\Phi$ as $t \to +\infty$ was deduced. 

From here one may go in many directions in order to continue investigating the topic of second order dynamics. Time scaling, for instance, can be introduced to improve the speed of convergence of the values, as it was done in \cite{BK}. Another way to proceed is to consider the so-called Tikhonov regularization technique, to which we devote the next chapter of our manuscript.

\subsection{Tikhonov regularization}

The presence of the Tikhonov term in the system equation dramatically influences the behaviour of its trajectories, namely, under some appropriate conditions, it improves the convergence of the trajectories from weak to a strong one. Not only that, but it also ensures the convergence not to an arbitrary element from the set of all minimizers of the objective, but to the particular one, which has the smallest norm. Under the presence of the Tikhonov term in the system it is still possible to obtain fast rates of convergece of the function values. Systems with Tikhonov regularization were studied in, for instance, in \cite{ABCR, ABCR_0, ACR, AL_0, BCL, BCL_0, CK, L}. 

One of the fine examples in a smooth setting is presented below (see \cite{ABCR_0})
\begin{equation*}
    \ddot x(t) + \alpha \sqrt{\varepsilon(t)} \dot x(t) + \beta \frac{d}{dt} \Big( \nabla \varphi_t (x(t)) + (p - 1) \varepsilon(t) x(t) \Big) + \nabla \varphi_t (x(t)) = 0 \text{ for } t \geq t_0,
\end{equation*}
where $\varphi_t (x) = \Phi(x) + \frac{\varepsilon(t) \| x \|^2}{2}$, $ \Phi: H \longrightarrow \mathbb{R}$ is twice continuously differentiable and convex, $\varepsilon$ is nonincreasing and goes to zero, as $t \to +\infty$, and $p$ is chosen appropriately. This system inherits the properties of fast convergence rates of the function values, being of the order $\frac{1}{t^2}$, and additionally provides the strong convergence results for the trajectories of the system in the same setting.

Concerning the nonsmooth case we refer to \cite{BCL_0}, where it was covered for the more general systems, governed by a maximally monotone operator, but with a different damping. The authors studied the following dynamics
\begin{equation}\label{OS}
    \ddot x(t) + \frac{\alpha}{t^q} \dot x(t) + \beta \frac{d}{dt} \Big( A_{\lambda(t)} (x(t)) \Big) + A_{\lambda(t)} (x(t)) + \varepsilon(t) x(t) = 0 \text{ for } t \geq t_0,
\end{equation}
where $\alpha > 0$, $\beta \geq 0$, $0 < q \leq 1$ and $\lambda(t) = \lambda t^{2q}$ for $\lambda > 0$, $A$ is a maximally monotone operator and $A_\lambda$ is its Yosida regularization of the order $\lambda$. The system \eqref{OS} is related to the inclusion problem $0 \in Ax$. The authors showed the fast convergence rates for $\| \dot x(t) \|$, $\| A_{\lambda(t)} (x(t)) \|$ and $\| \frac{d}{dt} A_{\lambda(t)} (x(t)) \|$ being of the order $\frac{1}{t^q}$, $\frac{1}{t^{2q}}$ and $\frac{1}{t^{3q}}$ correspondingly. Moreover, they established the strong convergence of the trajectories of the system.

\begin{remark}

We would like to stress that Theorem $11$ of \cite{BCL_0} does not cover the case presented in this paper.

\begin{enumerate}
    
    \item First of all, the systems \eqref{Syst_0} and \eqref{OS} have different damping coefficients. The damping in \eqref{Syst_0} depends on the Tikhonov function $\varepsilon$, while the damping in \eqref{OS} is taken in a polynomial form $\frac{1}{t^q}$. Thus, if we take $\varepsilon(t) = \frac{1}{t^{2q}}$ in \eqref{OS} to mimic the relation between the damping parameter and the Tikhonov function as in \eqref{Syst_0}, then one of the conditions of Theorem $11$ becomes
    \[
    \int_{t_0}^{+\infty} t^{3q} \varepsilon^2(t) dt \ = \ \int_{t_0}^{+\infty} \frac{1}{t^q} dt \ < \ +\infty,
    \]
    where $0 < q < 1$, which is obviously not fulfilled.

    \item Secondly, the smoothing parameter $\lambda$ in \cite{BCL_0} is fixed, while our analysis holds for more general choice of $\lambda$. However, if we want to consider the polynomial case of parameters (Section $5$), then we indeed arrive at a similar restriction for $\lambda$: in Section $6$ we will discover that for strong conergence of the trajectories and polynomial choice of parameters, $\lambda(t) = t^l$, we have to take $0 \leq l < 2$, which is a wider range than $0 < q < 1$ for $\lambda(t) = t^{2q}$.

    \item Finally, some of the conditions $\left( C_0 \right)$ -- $\left( C_4 \right)$ actually contradict with some of our assumptions \eqref{assumption_0} -- \eqref{assumption_3}.

\end{enumerate}

\end{remark}

In this paper we aim to develop the ideas presented in \cite{ABCR_0} for $p = 0$ to cover the nonsmooth case. Section $2$ gathers some preliminary results, which we will need in our analysis. The main result of our research is presented in Section $3$. In Section $4$ we study 
in more detalies the results of the previous section in order to show that they are meaningful. Section $5$ provides the polynomial setting, in which the results are valid and the analysis works. Section $6$ establishes the actual rates of convergence of the values and the trajectories. In Section $7$ we consider an interesting particular case $\varepsilon(t) = \frac{1}{t^2}$ and show fast convergence of the function values. Finally, Section $8$ is all about numerical experiments, which illustrate the theory.

\section{Preparatory results}

\subsection{Auxiliary estimates and properties}

Let us begin with two important properties, which we will later use in our analysis. The first one concerns the proximal mapping:
\begin{equation}\label{proxineq}
    \|\prox\nolimits_{\lambda \Phi}(x) - \prox\nolimits_{\mu \Phi}(x) \| \leq |\lambda  - \mu| \|\nabla \Phi_{\lambda} (x)\| \ \forall \lambda, \ \mu > 0.
\end{equation}
The second one, which is known as the first order optimality condition, in our case reads as
\begin{equation}\label{FOOC}
    \nabla \Phi_{\lambda(t)} (x_{\varepsilon(t), \lambda(t)}) + \varepsilon(t) x_{\varepsilon(t), \lambda(t)} = 0.
\end{equation}

We continue with the following lemma (see \cite{BC}, Proposition 12.22, for the first term of the lemma and \cite{AP}, Appendix, A$1$, for the second one). 

\begin{lemma}\label{L_0}

Let $\Phi: H \longrightarrow \overline{\mathbb{R}}$ be a proper, convex and lower semicontinuous function, $\lambda, \mu > 0$. Then
\begin{enumerate}

    \item\label{i} $(\Phi_\lambda)_\mu = \Phi_{\lambda + \mu}$.
    
    \item\label{ii} $ \prox_{\mu \Phi_\lambda} = \frac{\lambda}{\lambda + \mu} \Id + \frac{\mu}{\lambda + \mu} \prox_{(\lambda + \mu) \Phi} $.
    
\end{enumerate}

\end{lemma}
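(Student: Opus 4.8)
The plan is to prove both identities directly from the definition of the Moreau envelope and the proximal operator, exploiting the variational characterization of each. These are classical facts, so the strategy is to reduce everything to elementary infimal-convolution manipulations and the first-order optimality condition defining the proximal point.

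For part \eqref{i}, I would start from the definition of the double envelope:
\begin{equation*}
(\Phi_\lambda)_\mu(x) = \inf_{y \in H} \left\{ \Phi_\lambda(y) + \frac{1}{2\mu} \| x - y \|^2 \right\} = \inf_{y \in H} \inf_{z \in H} \left\{ \Phi(z) + \frac{1}{2\lambda} \| y - z \|^2 + \frac{1}{2\mu} \| x - y \|^2 \right\}.
\end{equation*}
Interchanging the order of the two infima, the inner problem over $y$ (with $z$ and $x$ fixed) is an unconstrained minimization of a strongly convex quadratic. I would solve it explicitly by setting the derivative in $y$ to zero, which yields the optimal $y = \frac{\mu z + \lambda x}{\lambda + \mu}$. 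Substituting back and simplifying the quadratic terms, the two squared-norm contributions collapse into a single term $\frac{1}{2(\lambda+\mu)} \| x - z \|^2$; this is the only genuine computation, and it is a routine completion-of-squares argument. The remaining infimum over $z$ is then exactly the definition of $\Phi_{\lambda+\mu}(x)$, establishing the first identity.

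For part \eqref{ii}, the cleanest route is to compute $\prox_{\mu \Phi_\lambda}(x)$ via its defining optimality condition. Since $\Phi_\lambda$ is differentiable with $\nabla \Phi_\lambda = \frac{1}{\lambda}(\Id - \prox_{\lambda\Phi})$ by \eqref{Morprox}, the point $u = \prox_{\mu \Phi_\lambda}(x)$ is characterized by $\mu \nabla \Phi_\lambda(u) + u - x = 0$, i.e. $u - x + \frac{\mu}{\lambda}\bigl(u - \prox_{\lambda\Phi}(u)\bigr) = 0$. An alternative and perhaps more transparent approach is to differentiate part \eqref{i}: applying \eqref{Morprox} to $(\Phi_\lambda)_\mu = \Phi_{\lambda+\mu}$ gives $\frac{1}{\mu}\bigl(\Id - \prox_{\mu\Phi_\lambda}\bigr) = \nabla(\Phi_\lambda)_\mu = \nabla \Phi_{\lambda+\mu} = \frac{1}{\lambda+\mu}\bigl(\Id - \prox_{(\lambda+\mu)\Phi}\bigr)$. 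Solving this relation for $\prox_{\mu\Phi_\lambda}$ directly yields
\begin{equation*}
\prox_{\mu\Phi_\lambda} = \Id - \frac{\mu}{\lambda+\mu}\bigl(\Id - \prox_{(\lambda+\mu)\Phi}\bigr) = \frac{\lambda}{\lambda+\mu}\Id + \frac{\mu}{\lambda+\mu}\prox_{(\lambda+\mu)\Phi},
\end{equation*}
which is precisely the claimed formula.

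I anticipate no serious obstacle here, as both statements are standard; the only step requiring care is the completion-of-squares in part \eqref{i}, where one must track the cross terms correctly to verify that the coefficient simplifies to $\frac{1}{2(\lambda+\mu)}$. Deriving part \eqref{ii} as a consequence of part \eqref{i} via \eqref{Morprox} is the most economical presentation, since it avoids re-solving a minimization problem and instead leverages the envelope identity already established. Because the paper cites \cite{BC} and \cite{AP} for these results, I would keep the proof brief and self-contained, presenting the infimal-convolution computation for \eqref{i} and then the one-line differentiation argument for \eqref{ii}.
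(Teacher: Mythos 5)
Your proposal is correct. Note that the paper itself offers no proof of this lemma; it simply cites Proposition 12.22 of \cite{BC} for part \eqref{i} and the appendix of \cite{AP} for part \eqref{ii}, so there is no in-text argument to compare against. Your self-contained derivation is sound: the interchange of the two infima is unconditionally valid (both orders equal the joint infimum over $(y,z)$), the minimizing $y = \frac{\mu z + \lambda x}{\lambda+\mu}$ gives $y - z = \frac{\lambda}{\lambda+\mu}(x-z)$ and $x - y = \frac{\mu}{\lambda+\mu}(x-z)$, and the two quadratic terms indeed sum to $\frac{1}{2(\lambda+\mu)}\|x-z\|^2$, which yields \eqref{i}. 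For \eqref{ii}, your differentiation argument is the most economical route: applying \eqref{Morprox} to both sides of \eqref{i} is legitimate because $\Phi_\lambda$ is itself a proper, convex, lower semicontinuous (indeed finite-valued and continuous) function, and solving the resulting operator identity for $\prox_{\mu\Phi_\lambda}$ gives exactly the claimed convex combination. The only stylistic remark is that the first characterization you mention for part \eqref{ii} (the optimality condition $u - x + \frac{\mu}{\lambda}(u - \prox_{\lambda\Phi}(u)) = 0$) is left dangling and is not needed once you adopt the differentiation argument; you could simply omit it.
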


The following estimates will be used later to evaluate the derivative of our energy function. 

\begin{lemma}\label{L_1}

The following properties are satisfied:

\begin{enumerate}

    \item\label{i_0} for each $t \geq t_0$, $\frac{d}{dt} \left( \varphi_{\varepsilon(t), \lambda(t)} (x_{\varepsilon(t), \lambda(t)}) \right) = \frac{1}{2} \left( \dot \varepsilon(t) - \dot \lambda(t) \varepsilon^2(t) \right) \| x_{\varepsilon(t), \lambda(t)} \|^2$;
    
    \item\label{ii_0} the function $t \mapsto x_{\varepsilon(t), \lambda(t)}$ is Lipschitz continuous on the compact intervals of $(t_0, +\infty)$, thus, is almost everywhere differentiable. Moreover, for almost every $t \geq t_0$
    \[
    \left( \frac{2 \dot \lambda(t)}{\lambda(t)} - \frac{\dot \varepsilon(t)}{\varepsilon(t)} \right) \| x_{\varepsilon(t), \lambda(t)} \| \geq \left\| \frac{d}{dt} x_{\varepsilon(t), \lambda(t)} \right\|.
    \]

\end{enumerate}

\end{lemma}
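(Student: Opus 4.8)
The plan is to treat the two parts separately, with the first-order optimality condition \eqref{FOOC} serving as the common engine. Throughout write $x(t) := x_{\varepsilon(t), \lambda(t)}$ and $g(t,x) := \varphi_{\varepsilon(t), \lambda(t)}(x) = \Phi_{\lambda(t)}(x) + \frac{\varepsilon(t)}{2}\|x\|^2$, so that the quantity in \ref{i_0} is the value function $h(t) := g(t, x(t)) = \min_{x \in H} g(t,x)$. For \ref{i_0} I would argue by the envelope (value-function) principle, which yields differentiability for \emph{every} $t$ without differentiating $x(\cdot)$. Since $x \mapsto g(t,x)$ is $C^1$ with $\nabla_x g(t,x) = \nabla\Phi_{\lambda(t)}(x) + \varepsilon(t) x$, minimality of $x(t)$ together with \eqref{FOOC} gives $\nabla_x g(t,x(t)) = 0$. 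Comparing $h(t+s) \le g(t+s, x(t))$ with $h(t) \le g(t, x(t+s))$, dividing by $s$ and letting $s \to 0$ — using continuity of $x(\cdot)$ (established below) and the joint continuity of $\partial_t g$, which follows from continuity of $\dot\lambda,\dot\varepsilon$ and of $(\mu,x)\mapsto \prox\nolimits_{\mu\Phi}(x)$ — produces $h'(t) = \partial_t g(t, x(t))$.

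It then remains to compute $\partial_t g$. By \eqref{deriv}, $\partial_t \Phi_{\lambda(t)}(x) = -\frac{\dot\lambda(t)}{2}\|\nabla\Phi_{\lambda(t)}(x)\|^2$, so $\partial_t g(t,x(t)) = -\frac{\dot\lambda(t)}{2}\|\nabla\Phi_{\lambda(t)}(x(t))\|^2 + \frac{\dot\varepsilon(t)}{2}\|x(t)\|^2$. Finally \eqref{FOOC} gives $\|\nabla\Phi_{\lambda(t)}(x(t))\| = \varepsilon(t)\|x(t)\|$, and substituting yields exactly $\frac{1}{2}\big(\dot\varepsilon(t) - \dot\lambda(t)\varepsilon^2(t)\big)\|x(t)\|^2$, as claimed.

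For \ref{ii_0} the heart of the matter is a two-point estimate on $\|x(t) - x(s)\|$. First I would record the uniform bound $\|x(t)\| \le \|x^*\|$, where $x^*$ is the minimal-norm minimizer of $\Phi$: testing minimality of $x(t)$ against $z = x^*$ and using $\Phi_{\lambda(t)}(x^*) = \Phi^* \le \Phi_{\lambda(t)}(x(t))$ collapses $g(t,x(t)) \le g(t,x^*)$ to $\|x(t)\| \le \|x^*\|$. Next, since $\nabla\Phi_{\lambda(t)} + \varepsilon(t)\Id$ is $\varepsilon(t)$-strongly monotone and vanishes at $x(t)$, I write
\[
\varepsilon(t)\|x(t)-x(s)\|^2 \le \big\langle -\big(\nabla\Phi_{\lambda(t)}(x(s)) + \varepsilon(t) x(s)\big),\, x(t)-x(s)\big\rangle,
\]
and use $\nabla\Phi_{\lambda(s)}(x(s)) = -\varepsilon(s) x(s)$ to rewrite $\varepsilon(t)x(s)$, so that after Cauchy--Schwarz and division by $\|x(t)-x(s)\|$,
\[
\varepsilon(t)\|x(t)-x(s)\| \le \|\nabla\Phi_{\lambda(t)}(x(s)) - \nabla\Phi_{\lambda(s)}(x(s))\| + |\varepsilon(s)-\varepsilon(t)|\,\|x(s)\|.
\]

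The remaining task — and the main obstacle — is to control the dependence of $\nabla\Phi_{\lambda}$ on the smoothing parameter at the fixed point $x(s)$. Writing $\nabla\Phi_\mu(x) = \frac{1}{\mu}(x - \prox\nolimits_{\mu\Phi}(x))$ and splitting
\[
\nabla\Phi_{\lambda(t)}(x) - \nabla\Phi_{\lambda(s)}(x) = \frac{\prox\nolimits_{\lambda(s)\Phi}(x) - \prox\nolimits_{\lambda(t)\Phi}(x)}{\lambda(t)} + \Big(\frac{1}{\lambda(t)}-\frac{1}{\lambda(s)}\Big)\big(x-\prox\nolimits_{\lambda(s)\Phi}(x)\big),
\]
each of the two summands is bounded by $\frac{|\lambda(s)-\lambda(t)|}{\lambda(t)}\|\nabla\Phi_{\lambda(s)}(x)\|$ — the first by \eqref{proxineq}, the second by the identity $\|x - \prox\nolimits_{\lambda(s)\Phi}(x)\| = \lambda(s)\|\nabla\Phi_{\lambda(s)}(x)\|$ — which is precisely where the factor $2$ in the statement originates. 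Evaluating at $x=x(s)$ and using $\|\nabla\Phi_{\lambda(s)}(x(s))\| = \varepsilon(s)\|x(s)\|$ turns the estimate into
\[
\varepsilon(t)\|x(t)-x(s)\| \le \frac{2|\lambda(s)-\lambda(t)|}{\lambda(t)}\varepsilon(s)\|x(s)\| + |\varepsilon(s)-\varepsilon(t)|\,\|x(s)\|.
\]
Since $\varepsilon,\lambda$ are $C^1$ (hence locally Lipschitz) and bounded away from $0$ on compact subintervals of $(t_0,+\infty)$, and $\|x(s)\| \le \|x^*\|$, the right-hand side is $O(|t-s|)$; this gives both continuity and local Lipschitz continuity (whence a.e. differentiability, a Lipschitz $H$-valued curve being differentiable almost everywhere). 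Dividing by $\varepsilon(t)|t-s|$ and letting $s \to t$, using $\dot\lambda \ge 0$, $\dot\varepsilon \le 0$, $\varepsilon(s)/\varepsilon(t) \to 1$ and continuity of $x(\cdot)$, yields exactly $\|\dot x(t)\| \le \big(\frac{2\dot\lambda(t)}{\lambda(t)} - \frac{\dot\varepsilon(t)}{\varepsilon(t)}\big)\|x(t)\|$.
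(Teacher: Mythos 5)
Your argument is correct, and it reaches both conclusions by a genuinely different route than the paper. For item \ref{i_0} the paper does not invoke an envelope theorem at all: it rewrites the optimal value as $\varphi_{\varepsilon(t),\lambda(t)}(x_{\varepsilon(t),\lambda(t)})=\big(\Phi_{\lambda(t)}\big)_{1/\varepsilon(t)}(0)=\Phi_{\lambda(t)+1/\varepsilon(t)}(0)$ via Lemma \ref{L_0}\ref{i}, differentiates using \eqref{deriv}, and then identifies $\nabla\Phi_{\lambda(t)+1/\varepsilon(t)}(0)$ with $-\varepsilon(t)x_{\varepsilon(t),\lambda(t)}$ through Lemma \ref{L_0}\ref{ii}; this sidesteps the Danskin-type limit exchange entirely and needs no continuity of $t\mapsto x_{\varepsilon(t),\lambda(t)}$, at the price of leaning on the algebra of Moreau envelopes. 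Your version is equally valid but should make explicit that the joint continuity of $(\mu,x)\mapsto\prox_{\mu\Phi}(x)$ you invoke follows from \eqref{proxineq} together with nonexpansiveness. For item \ref{ii_0} the paper works with divided differences of \eqref{FOOC} at $t$ and $t+h$, pairs them against $x_{\varepsilon(t+h),\lambda(t+h)}-x_{\varepsilon(t),\lambda(t)}$, divides by $h^2$, and passes to the limit using a prox-decomposition estimate imported from the time-scaling paper; it cites the local Lipschitz continuity of $t\mapsto x_{\varepsilon(t),\lambda(t)}$ from the literature rather than proving it. Your two-point strong-monotonicity inequality, combined with the splitting of $\nabla\Phi_{\lambda(t)}(x)-\nabla\Phi_{\lambda(s)}(x)$ into the two terms each bounded by $\frac{|\lambda(t)-\lambda(s)|}{\lambda(t)}\|\nabla\Phi_{\lambda(s)}(x)\|$ (via \eqref{proxineq} and \eqref{Morprox}), is self-contained: it delivers the local Lipschitz property and the a.e.\ derivative bound in one stroke, with the same factor $2$ appearing for the same structural reason. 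The only points worth tightening are the degenerate case $x(t)=x(s)$ before dividing by $\|x(t)-x(s)\|$ (trivial, but say so) and the observation that $\lambda$ is bounded below by $\lambda(t_0)>0$ since it is nondecreasing, which is what makes the constant in your Lipschitz bound uniform on compacts.
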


\begin{proof}

By the definition of $\varphi_{\varepsilon(t), \lambda(t)}$ 
\[
\varphi_{\varepsilon(t), \lambda(t)} (x_{\varepsilon(t), \lambda(t)}) = \inf_{y \in H} \left( \Phi_{\lambda(t)} (y) + \frac{\varepsilon(t)}{2} \| y - 0 \|^2 \right) = \left( \Phi_{\lambda(t)} \right)_{\frac{1}{\varepsilon(t)}} (0) = \Phi_{\lambda(t) + \frac{1}{\varepsilon(t)}} (0)
\]
by \ref{i} of Lemma \ref{L_0}. Thus,
\[
\frac{d}{dt} \left( \varphi_{\varepsilon(t), \lambda(t)} (x_{\varepsilon(t), \lambda(t)}) \right) \ = \ \frac{d}{dt} \left( \Phi_{\lambda(t) + \frac{1}{\varepsilon(t)}} (0) \right) = \frac{1}{2} \left( \frac{\dot \varepsilon(t)}{\varepsilon^2(t)} - \dot \lambda(t) \right) \| \nabla \Phi_{\lambda(t) + \frac{1}{\varepsilon(t)}} (0) \|^2,
\]
by \eqref{deriv}. From \eqref{FOOC} we obtain
\[
x_{\varepsilon(t), \lambda(t)} = \prox\nolimits_{\frac{1}{\varepsilon(t)}\Phi_{\lambda(t)}} (0) = \frac{\prox\nolimits_{\left( \lambda(t) + \frac{1}{\varepsilon(t)} \right) \Phi} (0)}{\lambda(t) \varepsilon(t) + 1},
\]
where the second equality comes from \ref{ii} of Lemma \ref{L_0}. Combining the last two equalities with \eqref{Morprox} we obtain
\begin{align*}
    \frac{d}{dt} \left( \varphi_{\varepsilon(t), \lambda(t)} (x_{\varepsilon(t), \lambda(t)}) \right) \ &= \ \frac{1}{2} \left( \frac{\dot \varepsilon(t)}{\varepsilon^2(t)} - \dot \lambda(t) \right) \left \| \frac{ 0 - \prox\nolimits_{\left( \lambda(t) + \frac{1}{\varepsilon(t)} \right) \Phi} (0)}{\lambda(t) + \frac{1}{\varepsilon(t)}} \right \|^2 \\
    &= \ \frac{1}{2} \left( \frac{\dot \varepsilon(t)}{\varepsilon^2(t)} - \dot \lambda(t) \right) \left \| \frac{ - \left( \lambda(t) \varepsilon(t) + 1 \right) x_{\varepsilon(t), \lambda(t)}}{\lambda(t) + \frac{1}{\varepsilon(t)}} \right \|^2 \\
    &= \ \frac{1}{2} \left( \dot \varepsilon(t) - \dot \lambda(t) \varepsilon^2(t) \right) \| x_{\varepsilon(t), \lambda(t)} \|^2,
\end{align*}
which is the first claim.

To obtain the second claim we start with \eqref{FOOC} noticing that for $h > 0$
\[
\nabla \Phi_{\lambda(t)} \left( x_{\varepsilon(t), \lambda(t)} \right) = -\varepsilon(t) x_{\varepsilon(t), \lambda(t)} \text{ and } \nabla \Phi_{\lambda(t+h)} (x_{\varepsilon(t+h), \lambda(t+h)}) = -\varepsilon(t+h) x_{\varepsilon(t+h), \lambda(t+h)}.
\]
Consider
\begin{align*}
    &\nabla \Phi_{\lambda(t+h)} (x_{\varepsilon(t+h), \lambda(t+h)}) - \nabla \Phi_{\lambda(t)} (x_{\varepsilon(t), \lambda(t)}) \\
    = \ &\nabla \Phi_{\lambda(t+h)} (x_{\varepsilon(t+h), \lambda(t+h)}) - \nabla \Phi_{\lambda(t+h)} (x_{\varepsilon(t), \lambda(t)}) + \nabla \Phi_{\lambda(t+h)} (x_{\varepsilon(t), \lambda(t)}) - \nabla \Phi_{\lambda(t)} (x_{\varepsilon(t), \lambda(t)}).
\end{align*}
Taking the inner product of each part of this equality with $x_{\varepsilon(t+h), \lambda(t+h)} - x_{\varepsilon(t), \lambda(t)}$, we notice that
\[
\langle \nabla \Phi_{\lambda(t+h)} (x_{\varepsilon(t+h), \lambda(t+h)}) - \nabla \Phi_{\lambda(t+h)} (x_{\varepsilon(t), \lambda(t)}), x_{\varepsilon(t+h), \lambda(t+h)} - x_{\varepsilon(t), \lambda(t)} \rangle \geq 0
\]
by the monotonicity of $\nabla \Phi_{\lambda(t+h)}$. So,
\begin{align*}
    &\langle \varepsilon(t) x_{\varepsilon(t), \lambda(t)} - \varepsilon(t+h) x_{\varepsilon(t+h), \lambda(t+h)}, x_{\varepsilon(t+h), \lambda(t+h)} - x_{\varepsilon(t), \lambda(t)} \rangle \ = \ -\varepsilon(t) \| x_{\varepsilon(t+h), \lambda(t+h)} - x_{\varepsilon(t), \lambda(t)} \|^2 \\
    &+ \ \left( \varepsilon(t) - \varepsilon(t+h) \right) \langle x_{\varepsilon(t+h), \lambda(t+h)}, x_{\varepsilon(t+h), \lambda(t+h)} - x_{\varepsilon(t), \lambda(t)} \rangle \\
    \geq \ &\langle \nabla \Phi_{\lambda(t+h)} (x_{\varepsilon(t), \lambda(t)}) - \nabla \Phi_{\lambda(t)} (x_{\varepsilon(t), \lambda(t)}), x_{\varepsilon(t+h), \lambda(t+h)} - x_{\varepsilon(t), \lambda(t)} \rangle.
\end{align*}
Let us divide the last inequality by $h^2$ to obtain
\begin{align*}
&\frac{\varepsilon(t) - \varepsilon(t+h)}{h} \left\langle x_{\varepsilon(t+h), \lambda(t+h)}, \frac{x_{\varepsilon(t+h), \lambda(t+h)} - x_{\varepsilon(t), \lambda(t)}}{h} \right\rangle \ \geq \ \varepsilon(t) \left\| \frac{x_{\varepsilon(t+h), \lambda(t+h)} - x_{\varepsilon(t), \lambda(t)}}{h} \right\|^2 \\
&+ \ \left\langle \frac{\nabla \Phi_{\lambda(t+h)} (x_{\varepsilon(t), \lambda(t)}) - \nabla \Phi_{\lambda(t)} (x_{\varepsilon(t), \lambda(t)})}{h}, \frac{x_{\varepsilon(t+h), \lambda(t+h)} - x_{\varepsilon(t), \lambda(t)}}{h} \right\rangle.
\end{align*}
Now notice that, since the mapping $t \mapsto x_{\varepsilon(t), \lambda(t)}$ is Lipschitz continuous on the compact intervals of $\mathbb{R}_+ \setminus \{0\}$ (according to \cite{AC}), therefore, almost everywhere differentiable. Tending $h$ to zero we deduce for almost every $t \geq t_0$
\[
-\dot \varepsilon(t) \left\langle x_{\varepsilon(t), \lambda(t)}, \frac{d}{dt} x_{\varepsilon(t), \lambda(t)} \right\rangle \ \geq \ \varepsilon(t) \left\| \frac{d}{dt} x_{\varepsilon(t), \lambda(t)} \right\|^2 - \frac{2\dot \lambda(t) \| \nabla \Phi_{\lambda(t)} (x_{\varepsilon(t), \lambda(t)}) \| \left\| \frac{d}{dt} x_{\varepsilon(t), \lambda(t)} \right\|}{\lambda(t)},
\]
where we used the following estimate from \cite{BK}
\begin{align*}
    &\lim_{h \to 0} \left \langle \frac{\nabla \Phi_{\lambda(t + h)} (x_{\varepsilon(t), \lambda(t)}) - \nabla \Phi_{\lambda(t)} (x_{\varepsilon(t), \lambda(t)})}{h}, \frac{x_{\varepsilon(t+h), \lambda(t+h)} - x_{\varepsilon(t), \lambda(t)}}{h} \right \rangle \\ 
    = \ &\lim_{h \to 0} \left \langle \frac{\nabla \Phi_{\lambda(t + h)} (x_{\varepsilon(t), \lambda(t)}) - \nabla \Phi_{\lambda(t)} (x_{\varepsilon(t), \lambda(t)})}{h}, \frac{x_{\varepsilon(t+h), \lambda(t+h)} - x_{\varepsilon(t), \lambda(t)}}{h} \right \rangle \\
    = \ &\lim_{h \to 0} \Bigg \langle \frac{ (\lambda(t + h)) \prox_{\lambda(t) \Phi} (x_{\varepsilon(t), \lambda(t)}) - \lambda(t) \prox_{(\lambda(t + h)) \Phi} (x_{\varepsilon(t), \lambda(t)}) - \big( \lambda(t + h) - \lambda(t) \big) x_{\varepsilon(t), \lambda(t)} }{\lambda(t) \lambda(t + h)h}, \\
    &\frac{x_{\varepsilon(t+h), \lambda(t+h)} - x_{\varepsilon(t), \lambda(t)}}{h} \Bigg \rangle \\
    = \ &\lim_{h \to 0} \left \langle \frac{\big( \lambda(t + h) - \lambda(t) \big) \big( \prox_{\lambda(t) \Phi} (x_{\varepsilon(t), \lambda(t)}) - x_{\varepsilon(t), \lambda(t)} \big)}{\lambda(t) \lambda(t + h) h}, \frac{x_{\varepsilon(t+h), \lambda(t+h)} - x_{\varepsilon(t), \lambda(t)}}{h} \right \rangle \\
    &- \ \lim_{h \to 0} \left \langle \frac{\prox_{(\lambda(t + h)) \Phi} (x_{\varepsilon(t), \lambda(t)}) - \prox_{\lambda(t) \Phi} (x_{\varepsilon(t), \lambda(t)})}{\lambda(t+h)h}, \frac{x_{\varepsilon(t+h), \lambda(t+h)} - x_{\varepsilon(t), \lambda(t)}}{h} \right \rangle \\
    \geq \ &\frac{\dot \lambda(t)}{\lambda^2(t)} \left \langle \prox\nolimits_{\lambda(t) \Phi} (x_{\varepsilon(t), \lambda(t)}) - x_{\varepsilon(t), \lambda(t)}, \frac{d}{dt} x_{\varepsilon(t), \lambda(t)} \right \rangle \\
    &- \ \lim_{h \to 0} \frac{(\lambda(t + h) - \lambda(t)) \| \nabla \Phi_{\lambda(t)} (x_{\varepsilon(t), \lambda(t)}) \| \left\| \frac{x_{\varepsilon(t+h), \lambda(t+h)} - x_{\varepsilon(t), \lambda(t)}}{h} \right\| }{\lambda(t + h) h} \\
    = \ &\frac{\dot \lambda(t)}{\lambda^2(t)} \left \langle \prox\nolimits_{\lambda(t) \Phi} (x_{\varepsilon(t), \lambda(t)}) - x_{\varepsilon(t), \lambda(t)}, \frac{d}{dt} x_{\varepsilon(t), \lambda(t)} \right \rangle \ - \ \frac{\dot \lambda(t) \| \nabla \Phi_{\lambda(t)} (x_{\varepsilon(t), \lambda(t)}) \| \left\| \frac{d}{dt} x_{\varepsilon(t), \lambda(t)} \right\|}{\lambda(t)} \\
    = \ & -\frac{\dot \lambda(t)}{\lambda(t)} \left \langle \nabla \Phi_{\lambda(t)} (x_{\varepsilon(t), \lambda(t)}), \frac{d}{dt} x_{\varepsilon(t), \lambda(t)} \right \rangle - \ \frac{\dot \lambda(t) \| \nabla \Phi_{\lambda(t)} (x_{\varepsilon(t), \lambda(t)}) \| \left\| \frac{d}{dt} x_{\varepsilon(t), \lambda(t)} \right\|}{\lambda(t)} \\ \geq \ & - \frac{2\dot \lambda(t) \| \nabla \Phi_{\lambda(t)} (x_{\varepsilon(t), \lambda(t)}) \| \left\| \frac{d}{dt} x_{\varepsilon(t), \lambda(t)} \right\|}{\lambda(t)},
\end{align*}
where we used \eqref{Morprox}, \eqref{proxineq} and the Cauchy-Schwarz inequality. On the other hand, Cauchy-Schwartz inequality yields
\[
-\dot \varepsilon(t) \left\langle x_{\varepsilon(t), \lambda(t)}, \frac{d}{dt} x_{\varepsilon(t), \lambda(t)} \right\rangle \leq -\dot \varepsilon(t) \| x_{\varepsilon(t), \lambda(t)} \| \left\| \frac{d}{dt} x_{\varepsilon(t), \lambda(t)} \right\|.
\]
Combining the last two inequalities we arrive at
\[
-\dot \varepsilon(t) \| x_{\varepsilon(t), \lambda(t)} \| + \frac{2\dot \lambda(t)}{\lambda(t)} \left\| \nabla \Phi_{\lambda(t)} (x_{\varepsilon(t), \lambda(t)}) \right\| \ \geq \ \varepsilon(t) \left\| \frac{d}{dt} x_{\varepsilon(t), \lambda(t)} \right\|.
\]
Replacing $\nabla \Phi_{\lambda(t)} (x_{\varepsilon(t), \lambda(t)})$ using \eqref{FOOC} gives us the second claim.

\end{proof}

Let us also mention two key properties of the Tikhonov regularization, which we will use later in the analysis

\begin{lemma}\label{Tikhonov}

Suppose that 
\begin{equation}\label{A_0}
    \lim_{t \to +\infty} \lambda(t) \varepsilon(t) \ = \ 0.
\end{equation}
Then the following properties of the mapping $t \longrightarrow x_{\varepsilon(t), \lambda(t)}$ are satisfied:
\begin{equation}\label{T_0}
    \text{ for } x^* = \proj\nolimits_{\argmin \Phi}(0), \ \| x_{\varepsilon(t), \lambda(t)} \| \leq \| x^* \| \text{ for all } t \geq t_0
\end{equation}
and
\begin{equation}\label{t_0}
    \lim_{t \to +\infty} \| x_{\varepsilon(t), \lambda(t)} - x^* \| = 0.
\end{equation}

\end{lemma}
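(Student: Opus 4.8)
The plan is to prove the two assertions separately, noting that \eqref{T_0} does not actually require \eqref{A_0}, whereas \eqref{t_0} relies on it in an essential way. For \eqref{T_0} a direct energy comparison suffices. Taking $y = x^*$ in the infimum defining the Moreau envelope gives $\Phi_{\lambda(t)}(x^*) \leq \Phi(x^*) = \Phi^*$, while $\Phi_{\lambda(t)}(x) \geq \inf \Phi = \Phi^*$ for every $x \in H$, so $\Phi_{\lambda(t)}(x^*) = \Phi^*$. Testing the minimality of $x_{\varepsilon(t), \lambda(t)}$ for the strongly convex function $\varphi_{\varepsilon(t), \lambda(t)}$ against the competitor $x^*$ yields
\[
\Phi_{\lambda(t)}(x_{\varepsilon(t), \lambda(t)}) + \frac{\varepsilon(t)}{2} \| x_{\varepsilon(t), \lambda(t)} \|^2 \ \leq \ \Phi_{\lambda(t)}(x^*) + \frac{\varepsilon(t)}{2} \| x^* \|^2 \ = \ \Phi^* + \frac{\varepsilon(t)}{2} \| x^* \|^2,
\]
and since $\Phi^* \leq \Phi_{\lambda(t)}(x_{\varepsilon(t), \lambda(t)})$, subtracting this from the left-hand side and dividing by $\varepsilon(t)/2 > 0$ gives \eqref{T_0}.

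For \eqref{t_0} I would argue by a weak-cluster-point argument along an arbitrary sequence $t_n \to +\infty$, abbreviating $x_n = x_{\varepsilon(t_n), \lambda(t_n)}$, $\varepsilon_n = \varepsilon(t_n)$, $\lambda_n = \lambda(t_n)$. By \eqref{T_0} the sequence $\{x_n\}$ is bounded, so it suffices to show that every weak cluster point equals $x^*$ and that $\| x_n \| \to \| x^* \|$. The first-order condition \eqref{FOOC} reads $\nabla \Phi_{\lambda_n}(x_n) = -\varepsilon_n x_n$, and combining it with \eqref{Morprox} gives $x_n - \prox_{\lambda_n \Phi}(x_n) = \lambda_n \nabla \Phi_{\lambda_n}(x_n) = -\lambda_n \varepsilon_n x_n$, whence $\prox_{\lambda_n \Phi}(x_n) = (1 + \lambda_n \varepsilon_n) x_n$. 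Using \eqref{T_0} and assumption \eqref{A_0} we then obtain
\[
\| \prox\nolimits_{\lambda_n \Phi}(x_n) - x_n \| \ = \ \lambda_n \varepsilon_n \| x_n \| \ \leq \ \lambda_n \varepsilon_n \| x^* \| \ \longrightarrow \ 0,
\]
which is exactly the point where \eqref{A_0} is indispensable.

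Now let $x_{n_k} \rightharpoonup \bar x$ be a weakly convergent subsequence and write $p_k = \prox_{\lambda_{n_k} \Phi}(x_{n_k})$. The optimality characterisation of the proximal point gives $-\varepsilon_{n_k} x_{n_k} \in \partial \Phi(p_k)$, where $-\varepsilon_{n_k} x_{n_k} \to 0$ strongly (because $\varepsilon_n \to 0$ and $\{x_n\}$ is bounded) and $p_k \rightharpoonup \bar x$ (because $p_k - x_{n_k} \to 0$). Passing to the limit in this inclusion, using that $\partial \Phi$ is maximally monotone and therefore has a graph closed for the weak--strong topology, yields $0 \in \partial \Phi(\bar x)$, i.e. $\bar x \in \argmin \Phi$. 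The minimal-norm characterisation of $x^* = \proj_{\argmin \Phi}(0)$ then forces $\bar x = x^*$: weak lower semicontinuity of the norm gives $\| \bar x \| \leq \liminf_k \| x_{n_k} \| \leq \| x^* \|$, while $\bar x \in \argmin \Phi$ gives $\| \bar x \| \geq \| x^* \|$, so $\| \bar x \| = \| x^* \|$ and uniqueness of the projection of $0$ onto the closed convex set $\argmin \Phi$ yields $\bar x = x^*$. Hence $x_n \rightharpoonup x^*$ for the full sequence, and the sandwich $\| x^* \| \leq \liminf_n \| x_n \| \leq \limsup_n \| x_n \| \leq \| x^* \|$ shows $\| x_n \| \to \| x^* \|$; in a Hilbert space weak convergence together with convergence of norms gives strong convergence, so $x_n \to x^*$. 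As $t_n \to +\infty$ was arbitrary, this proves \eqref{t_0}.

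I expect the main obstacle to be the passage to the limit in the inclusion $-\varepsilon_{n_k} x_{n_k} \in \partial \Phi(p_k)$ to identify every weak cluster point as a minimizer of $\Phi$: this is precisely where the weak--strong closedness of the graph of the maximally monotone operator $\partial \Phi$ and the scale condition \eqref{A_0} (forcing $p_k - x_{n_k} \to 0$) must be combined. Once this is in place, the upgrade from weak to strong convergence is routine, following from the norm-convergence criterion and the a priori bound \eqref{T_0}.
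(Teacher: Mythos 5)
Your proof is correct. For \eqref{T_0} you argue by testing the minimality of $x_{\varepsilon(t),\lambda(t)}$ for $\varphi_{\varepsilon(t),\lambda(t)}$ against the competitor $x^*$, using $\Phi_{\lambda(t)}(x^*)=\Phi^*\le\Phi_{\lambda(t)}(x_{\varepsilon(t),\lambda(t)})$; the paper instead uses the monotonicity of $\nabla\Phi_{\lambda(t)}$ together with \eqref{FOOC} and Cauchy--Schwarz. Both are one-line arguments, and yours has the small merit of making explicit that \eqref{A_0} plays no role in this bound. For \eqref{t_0} your skeleton coincides with the paper's -- boundedness, every weak sequential cluster point is a minimizer via the weak--strong sequential closedness of $\gra\partial\Phi$, identification of the cluster point as $x^*$, upgrade to strong convergence -- but the implementation differs in several sub-steps. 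The paper works with the resolvent identity $x_{\varepsilon(t),\lambda(t)}=\prox\nolimits_{(\lambda(t)+1/\varepsilon(t))\Phi}(0)/(\lambda(t)\varepsilon(t)+1)$ and the firm-nonexpansiveness inequality $\langle z-x_{\varepsilon(t),\lambda(t)},z\rangle\ge\|z-x_{\varepsilon(t),\lambda(t)}\|^2$ for $z\in\argmin\Phi$, which it then exploits three times: for boundedness, for the strong convergence of the subsequence, and for the variational characterization $\langle z-y,-y\rangle\le 0$ that identifies $y=x^*$. You bypass the resolvent identity entirely: boundedness comes from \eqref{T_0}, the identification comes from weak lower semicontinuity of the norm combined with the minimal-norm property ($\|\bar x\|\le\|x^*\|$ and $\bar x\in\argmin\Phi$ force $\bar x=x^*$ by uniqueness of the projection of $0$), and the upgrade from weak to strong convergence is the Radon--Riesz argument (weak convergence plus convergence of norms). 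Your placement of the subdifferential inclusion at the proximal point, $-\varepsilon_n x_n\in\partial\Phi(\prox\nolimits_{\lambda_n\Phi}(x_n))$ with $\prox\nolimits_{\lambda_n\Phi}(x_n)-x_n\to 0$ thanks to \eqref{A_0} and \eqref{Morprox}, is in fact the cleaner way to invoke the graph closedness: the paper states the corresponding inclusion at $x_{\varepsilon(t),\lambda(t)}$ itself after an unjustified rescaling of the argument of $\partial\Phi$, whereas your version needs no such step. Both proofs use \eqref{A_0} at exactly the same point, namely to make the graph elements converge to $(\bar x,0)$ in $H^{weak}\times H^{strong}$.
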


\begin{proof}

Suppose that $t \geq t_0$. By the monotonicity of $\nabla \Phi_{\lambda(t)}$ we deduce
\[
\left\langle \nabla \Phi_{\lambda(t)}(x_{\varepsilon(t), \lambda(t)}) - \nabla \Phi_{\lambda(t)}(x^*), x_{\varepsilon(t), \lambda(t)} - x^* \right\rangle \ \geq \ 0 \text{ for all } t \geq t_0.
\]
By \eqref{FOOC} we obtain
\[
\left\langle - \varepsilon(t) x_{\varepsilon(t), \lambda(t)}, x_{\varepsilon(t), \lambda(t)} - x^* \right\rangle \ = \ \varepsilon(t) \left( - \| x_{\varepsilon(t), \lambda(t)} \|^2 + \left\langle x_{\varepsilon(t), \lambda(t)}, x^* \right\rangle \right) \ \geq \ 0.
\]
Using Cauchy-Schwarz inequality we derive
\[
\| x_{\varepsilon(t), \lambda(t)} \| \ \leq \ \| x^* \|.
\]
This proves the first claim. For the second one consider \eqref{FOOC} again and note that it is equivalent to 
\begin{equation}\label{Char}
    x_{\varepsilon(t), \lambda(t)} = \prox\nolimits_{\frac{1}{\varepsilon(t)}\Phi_{\lambda(t)}} (0) = \frac{\prox\nolimits_{\left( \lambda(t) + \frac{1}{\varepsilon(t)} \right) \Phi} (0)}{\lambda(t) \varepsilon(t) + 1}
\end{equation}
by the item \ref{ii} of Lemma \ref{L_0}. Note that by \eqref{A_0} we have $\lambda(t) + \frac{1}{\varepsilon(t)} \to +\infty$ and $\lambda(t) \varepsilon(t) + 1 \to 1$, as $t \to +\infty$. From now on the proof is inspired by Theorem 23.44 of \cite{BC}. Take $z \in \argmin \Phi = \argmin \Phi_\lambda$ for each $\lambda > 0$. From \eqref{Char} and from the fact that the resolvent of maximally monotone operator is maximally monotone and firmly nonexpansive (see, for instance, Corollary 23.11(i) of \cite{BC}) and Cauchy-Schwarz inequality it follows that for all $t \geq t_0$ (note that $z$ could be represented as $z = \prox\nolimits_{\frac{1}{\varepsilon(t)}\Phi_{\lambda(t)}} (z)$)
\begin{equation}\label{Ineq}
    \left\| z - x_{\varepsilon(t), \lambda(t)} \right\| \| z - 0 \| \ \geq \ \left\langle z - x_{\varepsilon(t), \lambda(t)}, z - 0 \right\rangle \ \geq \ \| z - x_{\varepsilon(t), \lambda(t)} \|^2,
\end{equation}
which gives the boundedness of $x_{\varepsilon(t), \lambda(t)}$ for all $t \geq t_0$. Now, let $y$ be a weak sequential cluster point of $\{ x_{\varepsilon(t_n), \lambda(t_n)} \}_{n \in \mathbb{N}}$, namely, $x_{\varepsilon(t_{k_n}), \lambda(t_{k_n})} \rightharpoonup y$, as $n \to +\infty$. From \eqref{FOOC} we deduce
\[
\nabla \Phi_{\lambda(t_{k_n})} (x_{\varepsilon(t_{k_n}), \lambda(t_{k_n})}) + \varepsilon(t_{k_n}) x_{\varepsilon(t_{k_n}), \lambda(t_{k_n})} = 0.
\]
Using
\[
\nabla \Phi_\lambda \ = \ \left( \partial \Phi \right)_\lambda \ = \ \frac{1}{\lambda} \left( \Id - \left( \Id + \lambda \partial \Phi \right)^{-1} \right) \quad \forall \lambda > 0
\]
we further obtain
\[
\left( \Id + \lambda(t_{k_n}) \partial \Phi \right)^{-1} \big( x_{\varepsilon(t_{k_n}), \lambda(t_{k_n})} \big) \ = \ \Big( \lambda(t_{k_n}) \varepsilon(t_{k_n}) + 1 \Big) x_{\varepsilon(t_{k_n}), \lambda(t_{k_n})},
\]
which is equivalent to
\[
x_{\varepsilon(t_{k_n}), \lambda(t_{k_n})} \ \in \ \Big( \lambda(t_{k_n}) \varepsilon(t_{k_n}) + 1 \Big) x_{\varepsilon(t_{k_n}), \lambda(t_{k_n})} + \lambda(t_{k_n}) \partial \Phi \left( \left( \lambda(t_{k_n}) \varepsilon(t_{k_n}) + 1 \right) x_{\varepsilon(t_{k_n}), \lambda(t_{k_n})} \right)
\]
or
\begin{equation}\label{FOOC_new}
    \frac{ -\varepsilon(t_{k_n}) }{\lambda(t_{k_n}) \varepsilon(t_{k_n}) + 1} x_{\varepsilon(t_{k_n}), \lambda(t_{k_n})} \ \in \ \partial \Phi \left( x_{\varepsilon(t_{k_n}), \lambda(t_{k_n})} \right).
\end{equation}
The sequence 
\[
\left\{ x_{\varepsilon(t_{k_n}), \lambda(t_{k_n})}, \frac{ -\varepsilon(t_{k_n}) }{\lambda(t_{k_n}) \varepsilon(t_{k_n}) + 1} x_{\varepsilon(t_{k_n}), \lambda(t_{k_n})} \right\}
\]
lies in $\gra \partial \Phi$ by \eqref{FOOC_new} and converges to $(y, 0)$ in $H^{weak} \times H^{strong}$ due to the sequence $\{ x_{\varepsilon(t_{k_n}), \lambda(t_{k_n})} \}_{n \in \mathbb{N}}$ being also bounded and \eqref{A_0}. Therefore, since $\gra \partial \Phi$ is sequentially closed (see Proposition 20.38(ii) of \cite{BC}) it follows that $y \in \argmin \Phi$. From \eqref{Ineq} we derive 
\[
\| y - x_{\varepsilon(t_{k_n}), \lambda(t_{k_n})} \|^2 \ \leq \ \left\langle y - 0, y - x_{\varepsilon(t_{k_n}), \lambda(t_{k_n})} \right\rangle \to 0, \text{ as } n \to +\infty, 
\]
by the definition of weak convergence, thus, $x_{\varepsilon(t_{k_n}), \lambda(t_{k_n})} \to y$, as $n \to +\infty$. On the other hand, \eqref{Ineq} leads to
\begin{align*}
    0 \ &\geq \ \| z - x_{\varepsilon(t_{k_n}), \lambda(t_{k_n})} \|^2 - \left\langle z - x_{\varepsilon(t_{k_n}), \lambda(t_{k_n})}, z - 0 \right\rangle \\
    &= \ \left\langle z - x_{\varepsilon(t_{k_n}), \lambda(t_{k_n})}, 0 - x_{\varepsilon(t_{k_n}), \lambda(t_{k_n})} \right\rangle \to \left\langle z - y, 0 - y \right\rangle
\end{align*}
and thus $y = x^*$ by the characterization of $x^*$, namely,
\[
\text{for} x^* \in \argmin \Phi \text{ and } \forall z \in \argmin \Phi \text{ it holds that } \langle z - x^*, 0 - x^* \rangle \leq 0.
\] 
So, $x^*$ being the only weak sequential cluster point of the bounded sequence $\left\{ x_{\varepsilon(t_n), \lambda(t_n)} \right\}_{n \in \mathbb{N}}$ means that $x_{\varepsilon(t_n), \lambda(t_n)} \rightharpoonup x^*$, as $n \to +\infty$, by Lemma 2.46 of \cite{BC}. By \eqref{Ineq} again we deduce
\[
\| x_{\varepsilon(t_n), \lambda(t_n)} - x^* \|^2 \ \leq \ \left\langle 0 - x^*, x_{\varepsilon(t_n), \lambda(t_n)} - x^* \right\rangle \to 0, \text{ as } n \to +\infty
\]
and so the second claim follows.

\end{proof}

\subsection{Existence and uniqueness of the solution of \eqref{Syst_0}}

Our nearest goal is to deduce the existence and uniqueness of the solution of the dynamical system \eqref{Syst_0}. Suppose $\beta > 0$. Let us integrate \eqref{Syst_0} from $t_0$ to $t$ to obtain
\[
    \dot x(t) + \beta \nabla \Phi_{\lambda(t)} (x(t)) + \int_{t_0}^t \left( \alpha \sqrt{\varepsilon(s)} \dot x(s) + \nabla \Phi_{\lambda(s)} (x(s)) + \varepsilon(s) x(s) \right) ds - \dot x(t_0) - \beta \nabla \Phi_{\lambda(t_0)} (x(t_0)) \ = \ 0.
\]
Denoting $z(t) := \int_{t_0}^t \left( \alpha \sqrt{\varepsilon(s)}  \dot x(s) + \nabla \Phi_{\lambda(s)} (x(s)) + \varepsilon(s) x(s) \right) ds - \big( \dot x(t_0) + \beta \nabla \Phi_{\lambda(t_0)} (x_0)) \big)$ for every $t \geq t_0$ and noticing that $\dot z(t) = \alpha \sqrt{\varepsilon(t)} \dot x(t) + \nabla \Phi_{\lambda(t)} (x(t)) + \varepsilon(t) x(t)$ we deduce, that \eqref{Syst_0} is equivalent  to
\begin{equation*}
\begin{cases}
    &\dot x(t) + \beta \nabla \Phi_{\lambda(t)} (x(t)) + z(t) = 0, \\
    &\dot z(t) - \alpha \sqrt{\varepsilon(t)} \dot x(t) - \nabla \Phi_{\lambda(t)}(x(t)) - \varepsilon(t) x(t) = 0, \\
    &x(t_0) = x_0, \ z(t_0) = -\left(\dot x(t_0) + \beta \nabla \Phi_{\lambda(t_0)} (x_0) \right).
\end{cases}
\end{equation*}
Let us multiply the second one by $\beta$ and then by summing it with the first line we get rid of the gradient of the Moreau envelope in the second equation
\begin{equation*}
\begin{cases}
    &\dot x(t) + \beta \nabla \Phi_{\lambda(t)} (x(t)) + z(t) = 0, \\
    &\beta \dot z(t) + \left( 1 - \alpha \beta \sqrt{\varepsilon(t)} \right) \dot x(t) - \beta \varepsilon(t) x(t) + z(t) = 0,\\
    &x(t_0) = x_0, \ z(t_0) = -\left(\dot x(t_0) + \beta \nabla \Phi_{\lambda(t_0)} (x_0) \right).
\end{cases}
\end{equation*}
We denote now $y(t) = \beta z(t) + \left( 1 - \alpha \beta \sqrt{\varepsilon(t)} \right) x(t)$, and, after simplification, we obtain the following equivalent formulation for the dynamical system 
\begin{equation*}
\begin{cases}
    &\dot x(t) + \beta \nabla \Phi_{\lambda(t)} (x(t)) + \left( \alpha \sqrt{\varepsilon(t)} - \frac{1}{\beta} \right) x(t) + \frac{1}{\beta} y(t) = 0, \\
    &\dot y(t) + \left( \frac{\alpha \beta \dot \varepsilon(t)}{2\sqrt{\varepsilon(t)}} - \beta \varepsilon(t) - \frac{1}{\beta} + \alpha \sqrt{\varepsilon(t)} \right) x(t) + \frac{1}{\beta} y(t) = 0, \\
    &x(t_0) = x_0, \ y(t_0) = -\beta \left( \dot x(t_0) + \beta \nabla \Phi_{\lambda(t_0)} (x_0) \right) + \left( 1 - \alpha \beta \sqrt{\varepsilon(t_0)} \right) x_0.
\end{cases}
\end{equation*}
In case $\beta = 0$ for every $t \geq t_0$, \eqref{Syst_0}  can be equivalently written as
\begin{equation*}
\begin{cases}
&\dot x(t)  - y(t) = 0, \\
&\dot y(t) + \alpha \sqrt{\varepsilon(t)} y(t) + \nabla \Phi_{\lambda(t)}(x(t)) + \varepsilon(t) x(t) = 0, \\
&x(t_0) = x_0, \ y(t_0) = \dot x(t_0).
\end{cases}
\end{equation*}
Therefore, based on the two reformulations of the dynamical system \eqref{Syst_0} above we provide the following existence and uniqueness result, which is a consequence of the Cauchy-Lipschitz theorem for strong global solutions. The proof follows the lines of the proofs of Theorem $1$ in \cite{AL} or of Theorem $1.1$ in \cite{APR} with some small adjustments.

\begin{theorem}

Suppose that there exists $\lambda_0 > 0$ such that $\lambda(t) \geq \lambda_0$ for all $t \geq t_0$. Then for every $(x_0, \dot x(t_0)) \in H \times H $ there exists a unique strong global solution $x: [t_0, +\infty) \mapsto H$ of the continuous dynamics \eqref{Syst_0} which satisfies the Cauchy initial conditions $x(t_0) = x_0$ and $\dot x(t_0) = \dot x_0$.

\end{theorem}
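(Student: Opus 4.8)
The plan is to establish the result by applying the Cauchy--Lipschitz (Picard--Lindel\"of) theorem for strong global solutions to the first order reformulations of \eqref{Syst_0} derived just above the statement, in the spirit of Theorem $1$ in \cite{AL} and Theorem $1.1$ in \cite{APR}. In both the case $\beta > 0$ and the case $\beta = 0$ the reformulation has the form $\dot W(t) = F(t, W(t))$, $W(t_0) = W_0$, where $W = (x, y) \in H \times H$ and the field $F : [t_0, +\infty) \times (H \times H) \to H \times H$ collects the terms that are linear in $(x, y)$ with time dependent coefficients, together with the single nonlinear term $\beta \nabla \Phi_{\lambda(t)}(x)$. First I would fix an arbitrary $T > t_0$ and verify the two hypotheses of the Cauchy--Lipschitz theorem on $[t_0, T]$.

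For the Lipschitz property in the state variable, observe that the only nonlinearity is $x \mapsto \beta \nabla \Phi_{\lambda(t)}(x)$, which by \eqref{Morprox} is $\frac{\beta}{\lambda(t)}$-Lipschitz; here the standing assumption $\lambda(t) \geq \lambda_0$ is exactly what supplies a \emph{uniform} Lipschitz constant $\frac{\beta}{\lambda_0}$ independent of $t$. The remaining entries of $F$ are linear in $(x, y)$ with coefficients built from $\sqrt{\varepsilon(t)}$, $\varepsilon(t)$ and, when $\beta > 0$, from $\frac{\dot\varepsilon(t)}{2\sqrt{\varepsilon(t)}}$. Since $\varepsilon$ is continuously differentiable, nonincreasing and positive, on $[t_0, T]$ we have $\varepsilon(t) \geq \varepsilon(T) > 0$, so all these coefficients are continuous, hence bounded on $[t_0, T]$. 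Consequently $F(t, \cdot)$ is globally Lipschitz on $H \times H$ with a constant $L(t)$ that is bounded on $[t_0, T]$. For the measurability and integrability hypothesis I would note that, for each fixed $W$, the map $t \mapsto F(t, W)$ is continuous: the coefficients are continuous in $t$, and $t \mapsto \nabla \Phi_{\lambda(t)}(x) = \frac{1}{\lambda(t)}\big(x - \prox\nolimits_{\lambda(t)\Phi}(x)\big)$ is continuous because $\lambda$ is continuous and the proximal operator depends continuously on its parameter. In particular $t \mapsto F(t, 0)$ is continuous, hence bounded on $[t_0, T]$.

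These two facts yield, via Cauchy--Lipschitz, a unique absolutely continuous (strong) solution on a maximal interval $[t_0, T_{\max})$. To upgrade this to a global solution I would show $T_{\max} = +\infty$. Using $\|F(t, W)\| \leq L(t)\|W\| + \|F(t, 0)\|$ and integrating $W(t) = W_0 + \int_{t_0}^t F(s, W(s))\,ds$ gives, for any $T < T_{\max}$, the estimate $\|W(t)\| \leq \|W_0\| + \int_{t_0}^t \big( L(s)\|W(s)\| + \|F(s, 0)\| \big)\,ds$ on $[t_0, T]$, so Gronwall's inequality bounds $\|W(t)\|$ uniformly there. A bounded maximal trajectory cannot terminate at a finite $T_{\max}$, since it would then admit a limit and be continued past $T_{\max}$, contradicting maximality; hence $T_{\max} = +\infty$. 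Translating $W = (x, y)$ back through the substitutions that produced the two reformulations recovers the unique strong global solution $x$ of \eqref{Syst_0} with the prescribed Cauchy data, and the cases $\beta > 0$ and $\beta = 0$ are handled identically.

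The main obstacle is the careful, though ultimately routine, verification that the time dependent coefficients are well behaved. Everything hinges on keeping the nonlinearity \emph{uniformly} Lipschitz, which is guaranteed precisely by $\lambda(t) \geq \lambda_0 > 0$, and on the positivity of $\varepsilon$ on compact intervals, which is what controls the singular-looking factor $\frac{\dot\varepsilon(t)}{2\sqrt{\varepsilon(t)}}$ appearing in the $\beta > 0$ reformulation. Once these local bounds are in place, local existence and the Gronwall-based globalization follow in the standard way.
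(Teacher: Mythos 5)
Your proposal is correct and follows essentially the same route as the paper, which itself only sketches the argument by pointing to the two first-order reformulations and invoking the Cauchy--Lipschitz theorem along the lines of Theorem 1 in \cite{AL} and Theorem 1.1 in \cite{APR}. You correctly identify the two key points the paper relies on implicitly: the bound $\lambda(t) \geq \lambda_0$ makes $x \mapsto \beta \nabla \Phi_{\lambda(t)}(x)$ uniformly $\frac{\beta}{\lambda_0}$-Lipschitz, and the positivity of $\varepsilon$ on compact intervals keeps the time-dependent coefficients (including $\frac{\dot\varepsilon(t)}{2\sqrt{\varepsilon(t)}}$) bounded, after which local existence plus Gronwall gives the global solution.
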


\section{Main result}

This section is devoted to establishing some crucial estimates for the following quantities \\ $\Phi_{\lambda(t)}(x(t)) - \Phi^*$ and $\|x(t) - x_{\varepsilon(t), \lambda(t)}\|$ for all $t \geq t_0$. In order to do so we will use the ideas and methods of Lyapunov analysis. We introduce the energy function
\begin{equation}\label{Energy_0}
\begin{split}
    E(t) \ = \ &\varphi_{\varepsilon(t), \lambda(t)} (x(t)) - \varphi_{\varepsilon(t), \lambda(t)} (x_{\varepsilon(t), \lambda(t)}) \\
    &+ \ \frac{1}{2} \left \| \gamma \sqrt{\varepsilon(t)} \left( x(t) - x_{\varepsilon(t), \lambda(t)} \right) + \dot x(t) + \beta \nabla \Phi_{\lambda(t)} (x(t)) \right \|^2,
\end{split}
\end{equation}
where $\frac{\alpha}{2} \leq \gamma < \alpha$.

The idea is to show that this energy function satisfies the following differential inequality, as it was done in \cite{ABCR_0},
\[
\dot E(t) + \mu(t) E(t) + \frac{\beta}{2} \| \nabla \varphi_{\varepsilon(t), \lambda(t)} (x(t)) \|^2 \ \leq \ \frac{g(t) \| x^* \|^2}{2} \text{ for all } t \geq t_0,
\]
where $\mu(t) = \left( \alpha - \gamma \right) \sqrt{\varepsilon(t)} - \frac{\dot \varepsilon(t)}{2 \varepsilon(t)}$ and $g$ are positive functions. The next theorem provides the analysis needed to obtain the desired inequality.

\begin{theorem}\label{Main}

Let $x: [t_0, +\infty) \longrightarrow H$ be a solution of \eqref{Syst_0}. Assume that \eqref{A_0} holds and suppose that there exist $a, c > 0$ such that for $t$ large enough it holds that
\begin{equation}\label{assumption_0}
    \frac{d}{dt} \left( \frac{1}{\sqrt{\varepsilon(t)}} \right) \ \leq \ \min \left\{ 2 \gamma - \alpha - \frac{\gamma \beta \dot \varepsilon(t)}{2 \varepsilon(t)}, \ \alpha - \gamma \frac{a + 1}{a} \right\}  
\end{equation}
\begin{equation}\label{assumption_1}
    \left( 2 \gamma (\alpha - \gamma)  + \frac{\gamma}{c} - 1 \right) \varepsilon(t) - \beta \dot \varepsilon(t) \ \leq \ 0,        
\end{equation}
\begin{equation}\label{assumption_2}
    2 \beta \varepsilon^2(t) + \left( 2 - \gamma \beta \sqrt{\varepsilon(t)} \right) \dot \varepsilon(t) \ \leq \ 0
\end{equation}
and
\begin{equation}\label{assumption_3}
    \left( \frac{\gamma}{a} + 2 (\alpha - \gamma) \right) \beta^2 \sqrt{\varepsilon(t)} - \frac{3 \beta^2 \dot \varepsilon(t)}{2 \varepsilon(t)} - \dot \lambda(t) \ \leq \ \beta.
\end{equation}
Then there exists $t_1 \geq t_0$ such that for all $t \geq t_1$
\[
\beta \int_{t_1}^t \| \nabla \varphi_{\varepsilon(s)} (x(s)) \|^2 ds \ \leq \ 2 E(t_1) + \| x^* \|^2 \int_{t_1}^t g(s) ds
\]
and
\[
E(t) \ \leq \ \frac{\| x^* \|^2}{2 \Gamma(t)} \int_{t_1}^t \Gamma(s) g(s) ds + \frac{\Gamma(t_1) E(t_1)}{\Gamma(t)},
\]
where $\Gamma(t) = \exp \left( \int_{t_1}^t \mu(s) ds \right)$ and $g(t) = \dot \lambda(t) \varepsilon^2(t) - \dot \varepsilon(t) + \frac{\gamma \beta \dot \varepsilon(t) \sqrt{\varepsilon(t)}}{2} + \gamma (2a + c \gamma) \sqrt{\varepsilon(t)} \left( \frac{2 \dot \lambda(t)}{\lambda(t)} - \frac{\dot \varepsilon(t)}{\varepsilon(t)} \right)^2 $.

\end{theorem}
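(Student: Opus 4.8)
The plan is to prove the two displayed estimates in two stages. First I would establish, for $t$ large enough, the pointwise differential inequality announced just before the statement,
\[
\dot E(t) + \mu(t) E(t) + \frac{\beta}{2}\|\nabla\varphi_{\varepsilon(t),\lambda(t)}(x(t))\|^2 \ \leq \ \frac{g(t)\|x^*\|^2}{2},
\]
and then integrate it against the factor $\Gamma$. Essentially all the difficulty is concentrated in the first stage; the second is a short integration.

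To obtain the inequality I would write $E = A + B$ with $A(t) = \varphi_{\varepsilon(t),\lambda(t)}(x(t)) - \varphi_{\varepsilon(t),\lambda(t)}(x_{\varepsilon(t),\lambda(t)})$ and $B(t) = \frac{1}{2}\|v(t)\|^2$, where $v(t) = \gamma\sqrt{\varepsilon(t)}(x(t) - x_{\varepsilon(t),\lambda(t)}) + \dot x(t) + \beta\nabla\Phi_{\lambda(t)}(x(t))$, and differentiate each piece. For $A$, the chain rule applied to $\varphi_{\varepsilon(t),\lambda(t)}(x) = \Phi_{\lambda(t)}(x) + \frac{\varepsilon(t)}{2}\|x\|^2$ together with \eqref{deriv} gives
\[
\frac{d}{dt}\varphi_{\varepsilon(t),\lambda(t)}(x(t)) = \langle\nabla\varphi_{\varepsilon(t),\lambda(t)}(x(t)),\dot x(t)\rangle - \frac{\dot\lambda(t)}{2}\|\nabla\Phi_{\lambda(t)}(x(t))\|^2 + \frac{\dot\varepsilon(t)}{2}\|x(t)\|^2,
\]
while the derivative of the value at the minimizer is supplied verbatim by item \ref{i_0} of Lemma \ref{L_1}. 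For $B$, I would compute $\dot B = \langle v,\dot v\rangle$ and, crucially, use \eqref{Syst_0} in the form $\ddot x + \beta\frac{d}{dt}\nabla\Phi_{\lambda(t)}(x) = -\alpha\sqrt{\varepsilon(t)}\dot x - \nabla\varphi_{\varepsilon(t),\lambda(t)}(x)$ to eliminate both the acceleration and the Hessian-driven term from $\dot v$, leaving only $\dot x(t)$, $\nabla\varphi_{\varepsilon(t),\lambda(t)}(x(t))$, the factor $\frac{\dot\varepsilon(t)}{2\sqrt{\varepsilon(t)}}$ from differentiating $\sqrt{\varepsilon(t)}$, and the term $-\gamma\sqrt{\varepsilon(t)}\frac{d}{dt}x_{\varepsilon(t),\lambda(t)}$.

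The next step is to regroup $\dot A + \dot B$ into $-\mu(t)E(t) - \frac{\beta}{2}\|\nabla\varphi_{\varepsilon(t),\lambda(t)}(x(t))\|^2$ plus a controllable remainder. Here I would use that $\varphi_{\varepsilon(t),\lambda(t)}$ is $\varepsilon(t)$-strongly convex with $\nabla\varphi_{\varepsilon(t),\lambda(t)}(x_{\varepsilon(t),\lambda(t)}) = 0$ by \eqref{FOOC}, through the two strong-convexity inequalities
\[
A(t) \geq \tfrac{\varepsilon(t)}{2}\|x(t) - x_{\varepsilon(t),\lambda(t)}\|^2, \qquad \langle\nabla\varphi_{\varepsilon(t),\lambda(t)}(x(t)), x(t) - x_{\varepsilon(t),\lambda(t)}\rangle \geq A(t) + \tfrac{\varepsilon(t)}{2}\|x(t) - x_{\varepsilon(t),\lambda(t)}\|^2;
\]
these, together with the viscous contribution $-\alpha\sqrt{\varepsilon(t)}\dot x(t)$ and the $\gamma\sqrt{\varepsilon(t)}$-scaling built into $v(t)$, are what generate the decay coefficient $\mu(t) = (\alpha-\gamma)\sqrt{\varepsilon(t)} - \frac{\dot\varepsilon(t)}{2\varepsilon(t)}$ and the dissipation $\frac{\beta}{2}\|\nabla\varphi_{\varepsilon(t),\lambda(t)}(x(t))\|^2$. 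The remaining terms carry $\frac{d}{dt}x_{\varepsilon(t),\lambda(t)}$, $\|x_{\varepsilon(t),\lambda(t)}\|$, $\|x(t)\|^2$, $\|\nabla\Phi_{\lambda(t)}(x(t))\|^2$, and cross products of these with $v(t)$; the cross products I would split by Young's inequality, the constants $a$ and $c$ of the statement being precisely the weights of these splittings. Every factor $\|\frac{d}{dt}x_{\varepsilon(t),\lambda(t)}\|$ is then bounded through item \ref{ii_0} of Lemma \ref{L_1} and every $\|x_{\varepsilon(t),\lambda(t)}\|$ replaced by $\|x^*\|$ via \eqref{T_0} (legitimate since $\dot\lambda(t)\varepsilon^2(t) - \dot\varepsilon(t) \geq 0$), which is how $\|x^*\|^2$ and the last summand of $g$ arise, while item \ref{i_0} of Lemma \ref{L_1} produces the $\dot\lambda(t)\varepsilon^2(t) - \dot\varepsilon(t)$ part. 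The four hypotheses \eqref{assumption_0}--\eqref{assumption_3} then enter one at a time, each forcing the coefficient of one leftover quadratic form to be nonpositive so that it can be discarded; the threshold $t_1$ is the time past which all four hold simultaneously and $\mu(t) \geq 0$.

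Granting the differential inequality on $[t_1,+\infty)$, both conclusions follow at once. Multiplying by $\Gamma(t) = \exp(\int_{t_1}^t\mu(s)\,ds)$ turns the left side into $\frac{d}{dt}(\Gamma E) + \frac{\beta}{2}\Gamma\|\nabla\varphi_{\varepsilon(t),\lambda(t)}(x(t))\|^2$; integrating from $t_1$ to $t$, discarding the nonnegative dissipation integral and dividing by $\Gamma(t)$ yields the second estimate. For the first estimate I would instead integrate the weaker inequality obtained by dropping $\mu(t)E(t) \geq 0$, namely $\dot E + \frac{\beta}{2}\|\nabla\varphi_{\varepsilon(t),\lambda(t)}(x(t))\|^2 \leq \frac{g(t)\|x^*\|^2}{2}$, and use $E(t)\geq 0$ to absorb the endpoint term, obtaining $\beta\int_{t_1}^t\|\nabla\varphi_{\varepsilon(s),\lambda(s)}(x(s))\|^2\,ds \leq 2E(t_1) + \|x^*\|^2\int_{t_1}^t g(s)\,ds$. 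The main obstacle is entirely the bookkeeping of the first stage: after substituting the dynamics and the strong-convexity bounds, one must collect the many quadratic contributions so that they regroup \emph{exactly} into $\mu(t)E(t)$, the dissipation $\frac{\beta}{2}\|\nabla\varphi_{\varepsilon(t),\lambda(t)}(x(t))\|^2$, and four residual forms whose signs are governed by \eqref{assumption_0}--\eqref{assumption_3}; matching the precise shapes of $\mu$ and of $g$ is where nearly all the effort lies.
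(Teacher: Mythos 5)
Your proposal follows essentially the same route as the paper: the same Lyapunov function split into the gap term and $\frac{1}{2}\|v\|^2$ with the same $v$, elimination of $\ddot x + \beta\frac{d}{dt}\nabla\Phi_{\lambda(t)}(x)$ via \eqref{Syst_0}, the two strong-convexity inequalities, Young splittings weighted by $a$ and $c$, the bounds from Lemma \ref{L_1} and \eqref{T_0}, sign control of the residual quadratic forms by \eqref{assumption_0}--\eqref{assumption_3}, and the two integrations of the resulting differential inequality. The plan is correct and matches the paper's proof in structure and in all essential ingredients.
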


\begin{proof}

We start with computing the derivative of the energy function \eqref{Energy_0}. Let us denote $v(t) = \gamma \sqrt{\varepsilon(t)} \left( x(t) - x_{\varepsilon(t), \lambda(t)} \right) + \dot x(t) + \beta \nabla \Phi_{\lambda(t)} (x(t))$. Once again, by the classical derivation chain rule using \eqref{i} from Lemma \ref{L_1} and \eqref{deriv} we obtain for all $t \geq t_0$
\begin{align*}
    \dot E(t) \ = \ &\langle \nabla \varphi_{\varepsilon(t), \lambda(t)} (x(t)), \dot x(t) \rangle + \frac{\dot \varepsilon(t)}{2} \| x(t) \|^2 + \frac{1}{2} \left( \dot \lambda(t) \varepsilon^2(t) - \dot \varepsilon(t) \right) \| x_{\varepsilon(t), \lambda(t)} \|^2 \\
    &+ \ \langle \dot v(t), v(t) \rangle - \frac{\dot \lambda(t)}{2} \| \nabla \Phi_{\lambda(t)} (x(t)) \|^2.
\end{align*}
Our nearest goal is to obtain the upper bound for $\dot E$. Let us calculate for all $t \geq t_0$
\begin{align*}
    \dot v(t) \ &= \ \frac{\gamma \dot \varepsilon(t)}{2 \sqrt{\varepsilon(t)}} \left( x(t) - x_{\varepsilon(t), \lambda(t)} \right) + \gamma \sqrt{\varepsilon(t)} \dot x(t) - \gamma \sqrt{\varepsilon(t)} \frac{d}{dt} x_{\varepsilon(t), \lambda(t)} + \ddot x(t) + \beta \frac{d}{dt} \left( \nabla \Phi_{\lambda(t)} (x(t)) \right) \\
    &= \ \frac{\gamma \dot \varepsilon(t)}{2 \sqrt{\varepsilon(t)}} \left( x(t) - x_{\varepsilon(t), \lambda(t)} \right) + \left( \gamma - \alpha \right) \sqrt{\varepsilon(t)} \dot x(t) - \gamma \sqrt{\varepsilon(t)} \frac{d}{dt} x_{\varepsilon(t), \lambda(t)} - \nabla \Phi_{\lambda(t)} (x(t)) - \varepsilon(t) x(t) \\
    &= \ \frac{\gamma \dot \varepsilon(t)}{2 \sqrt{\varepsilon(t)}} \left( x(t) - x_{\varepsilon(t), \lambda(t)} \right) + \left( \gamma - \alpha \right) \sqrt{\varepsilon(t)} \dot x(t) - \gamma \sqrt{\varepsilon(t)} \frac{d}{dt} x_{\varepsilon(t), \lambda(t)} - \nabla \varphi_{\varepsilon(t), \lambda(t)} (x(t)),
\end{align*}
where above we used \eqref{Syst_0}. Thus, for all $t \geq t_0$
\begin{align*}
    \langle \dot v(t), v(t) \rangle \ = \ &\frac{\gamma^2 \dot \varepsilon(t)}{2} \| x(t) - x_{\varepsilon(t), \lambda(t)} \|^2 + \left( \frac{\gamma \dot \varepsilon(t)}{2 \sqrt{\varepsilon(t)}} + \gamma (\gamma - \alpha) \varepsilon(t) \right) \langle x(t) - x_{\varepsilon(t), \lambda(t)}, \dot x(t) \rangle \\
    &- \ \gamma^2 \varepsilon(t) \left \langle \frac{d}{dt} x_{\varepsilon(t), \lambda(t)}, x(t) - x_{\varepsilon(t), \lambda(t)} \right \rangle - \gamma \sqrt{\varepsilon(t)} \left \langle x(t) - x_{\varepsilon(t), \lambda(t)}, \nabla \varphi_{\varepsilon(t), \lambda(t)} (x(t)) \right \rangle \\
    &+ \ \left( \gamma - \alpha \right) \sqrt{\varepsilon(t)} \| \dot x(t) \|^2 - \gamma \sqrt{\varepsilon(t)} \left \langle \frac{d}{dt} x_{\varepsilon(t), \lambda(t)}, \dot x(t) \right \rangle - \left \langle \nabla \varphi_{\varepsilon(t), \lambda(t)} (x(t)), \dot x(t) \right \rangle \\
    &+ \ \frac{\gamma \beta \dot \varepsilon(t)}{2 \sqrt{\varepsilon(t)}} \left \langle x(t) - x_{\varepsilon(t), \lambda(t)}, \nabla \Phi_{\lambda(t)} (x(t)) \right \rangle + \beta \left( \gamma - \alpha \right) \sqrt{\varepsilon(t)} \left \langle \nabla \Phi_{\lambda(t)} (x(t)), \dot x(t) \right \rangle \\ 
    &- \ \gamma \beta \sqrt{\varepsilon(t)} \left \langle \frac{d}{dt} x_{\varepsilon(t), \lambda(t)}, \nabla \Phi_{\lambda(t)} (x(t)) \right \rangle - \beta \left \langle \nabla \varphi_{\varepsilon(t), \lambda(t)} (x(t)), \nabla \Phi_{\lambda(t)} (x(t)) \right \rangle.
\end{align*}
Let us use the previous estimates to evaluate the quantity $\langle \dot v(t), v(t) \rangle$. Namely, by the $\varepsilon(t)$-strong convexity of $\varphi_{\varepsilon(t), \lambda(t)}$ for all $t \geq t_0$
\[
\varphi_{\varepsilon(t), \lambda(t)} (x_{\varepsilon(t), \lambda(t)}) - \varphi_{\varepsilon(t), \lambda(t)} (x(t)) \ \geq \ \left\langle \nabla \varphi_{\varepsilon(t), \lambda(t)} (x(t)), x_{\varepsilon(t), \lambda(t)} - x(t) \right\rangle + \frac{\varepsilon(t)}{2} \| x_{\varepsilon(t), \lambda(t)} - x(t) \|^2
\]
and then for all $t \geq t_0$
\begin{align*}
    &- \gamma \sqrt{\varepsilon(t)} \left \langle x(t) - x_{\varepsilon(t), \lambda(t)}, \nabla \varphi_{\varepsilon(t), \lambda(t)} (x(t)) \right \rangle \\
    \leq \ &- \gamma \sqrt{\varepsilon(t)} \left( \varphi_{\varepsilon(t), \lambda(t)} (x(t)) - \varphi_{\varepsilon(t), \lambda(t)} (x_{\varepsilon(t), \lambda(t)}) \right) - \frac{\gamma \varepsilon^{\frac{3}{2}}(t)}{2} \| x_{\varepsilon(t), \lambda(t)} - x(t) \|^2.
\end{align*}
Again, by the $\varepsilon(t)$-strong convexity of $\varphi_{\varepsilon(t), \lambda(t)}$ since $\dot \varepsilon(t) \ \leq \ 0$ for all $t \geq t_0$ 
\begin{align*}
    &\frac{\gamma \beta \dot \varepsilon(t)}{2 \sqrt{\varepsilon(t)}} \left \langle x(t) - x_{\varepsilon(t), \lambda(t)}, \nabla \Phi_{\lambda(t)} (x(t)) + \varepsilon(t) x(t) - \varepsilon(t) x(t) \right \rangle \\
    \leq \ &\frac{\gamma \beta \dot \varepsilon(t)}{2 \sqrt{\varepsilon(t)}} \Bigg( \varphi_{\varepsilon(t), \lambda(t)} (x(t)) - \varphi_{\varepsilon(t), \lambda(t)} (x_{\varepsilon(t), \lambda(t)}) - \frac{\varepsilon(t)}{2} \| x(t) - x_{\varepsilon(t), \lambda(t)} \|^2 \\
    &- \ \varepsilon(t) \left\langle x(t) - x_{\varepsilon(t), \lambda(t)}, x(t) \right\rangle \Bigg).
\end{align*}
Furthermore,
\[
- \varepsilon(t) \left\langle x(t) - x_{\varepsilon(t), \lambda(t)}, x(t) \right\rangle \ = \ -\frac{\varepsilon(t)}{2} \Big( \| x(t) - x_{\varepsilon(t), \lambda(t)} \|^2 + \| x(t) \|^2 - \| x_{\varepsilon(t), \lambda(t)} \|^2 \Big).
\]
It is true that for all $a > 0$
\[
- \gamma \sqrt{\varepsilon(t)} \left \langle \frac{d}{dt} x_{\varepsilon(t), \lambda(t)}, \dot x(t) \right \rangle \ \leq \ \frac{\gamma \sqrt{\varepsilon(t)}}{2a} \| \dot x(t) \|^2 + \frac{a \gamma \sqrt{\varepsilon(t)}}{2} \left \| \frac{d}{dt} x_{\varepsilon(t), \lambda(t)} \right \|^2
\]
as well as
\[
- \gamma \beta \sqrt{\varepsilon(t)} \left \langle \frac{d}{dt} x_{\varepsilon(t), \lambda(t)}, \nabla \Phi_{\lambda(t)} (x(t)) \right \rangle \ \leq \ \frac{\gamma \beta^2 \sqrt{\varepsilon(t)}}{2a} \| \nabla \Phi_{\lambda(t)} (x(t)) \|^2 + \frac{a \gamma \sqrt{\varepsilon(t)}}{2} \left \| \frac{d}{dt} x_{\varepsilon(t), \lambda(t)} \right \|^2.
\]
In the same spirit for all $b > 0$
\[
- \gamma^2 \varepsilon(t) \left \langle \frac{d}{dt} x_{\varepsilon(t), \lambda(t)}, x(t) - x_{\varepsilon(t), \lambda(t)} \right \rangle \ \leq \ \frac{b \gamma \sqrt{\varepsilon(t)}}{2} \left \| \frac{d}{dt} x_{\varepsilon(t), \lambda(t)} \right \|^2 + \frac{\gamma^3 \varepsilon^{\frac{3}{2}}(t)}{2b} \| x(t) - x_{\varepsilon(t), \lambda(t)} \|^2.
\]
Furthermore,
\begin{align*}
    \left( \gamma - \alpha \right) \sqrt{\varepsilon(t)} \left( \| \dot x(t) \|^2 + \beta \left \langle \nabla \Phi_{\lambda(t)} (x(t)), \dot x(t) \right \rangle \right) \ = \ &\frac{\left( \gamma - \alpha \right) \sqrt{\varepsilon(t)}}{2} \Big( \| \dot x(t) \|^2 + \| \dot x(t) + \beta \nabla \Phi_{\lambda(t)} (x(t)) \|^2 \\
    &- \ \beta^2 \| \nabla \Phi_{\lambda(t)} (x(t)) \|^2 \Big)
\end{align*}
and
\begin{equation}\label{estimate_0}
\begin{split}
    &- \beta \left \langle \nabla \varphi_{\varepsilon(t), \lambda(t)} (x(t)), \nabla \Phi_{\lambda(t)} (x(t)) \right \rangle \\ 
    = \ &- \frac{\beta}{2} \Big( \| \nabla \varphi_{\varepsilon(t), \lambda(t)} (x(t)) \|^2 + \| \nabla \Phi_{\lambda(t)} (x(t)) \|^2 - \| \nabla \varphi_{\varepsilon(t), \lambda(t)} (x(t)) - \nabla \Phi_{\lambda(t)} (x(t)) \|^2 \Big) \\
    = \ &- \frac{\beta}{2} \Big( \| \nabla \varphi_{\varepsilon(t), \lambda(t)} (x(t)) \|^2 + \| \nabla \Phi_{\lambda(t)} (x(t)) \|^2 - \varepsilon^2(t) \| x(t) \|^2 \Big)
\end{split}
\end{equation}
Combining all the estimates above we arrive for all $t \geq t_0$ at
\begin{align*}
    \langle \dot v(t), v(t) \rangle \ \leq \ &\left(\frac{\gamma \beta \dot \varepsilon(t)}{2 \sqrt{\varepsilon(t)}} - \gamma \sqrt{\varepsilon(t)} \right) \left( \varphi_{\varepsilon(t), \lambda(t)} (x(t)) - \varphi_{\varepsilon(t), \lambda(t)} (x_{\varepsilon(t), \lambda(t)}) \right) \\
    &+ \ \left( \frac{\gamma \dot \varepsilon(t)}{2 \sqrt{\varepsilon(t)}} + \gamma (\gamma - \alpha) \varepsilon(t) \right) \langle x(t) - x_{\varepsilon(t), \lambda(t)}, \dot x(t) \rangle \\
    &+ \ \left( \frac{\gamma^2 \dot \varepsilon(t)}{2} + \frac{\gamma^3 \varepsilon^{\frac{3}{2}}(t)}{2b} - \frac{\gamma \varepsilon^{\frac{3}{2}}(t)}{2} - \frac{\gamma \beta \dot \varepsilon(t) \sqrt{\varepsilon(t)}}{2} \right) \| x(t) - x_{\varepsilon(t), \lambda(t)} \|^2 \\
    &+ \ \left( \frac{\gamma}{a} + \gamma - \alpha \right) \frac{\sqrt{\varepsilon(t)}}{2} \| \dot x(t) \|^2 + \frac{(\gamma - \alpha) \sqrt{\varepsilon(t)}}{2} \| \dot x(t) + \beta \nabla \Phi_{\lambda(t)} (x(t)) \|^2 \\
    &+ \ \frac{\gamma (2a + b) \sqrt{\varepsilon(t)}}{2} \left \| \frac{d}{dt} x_{\varepsilon(t), \lambda(t)} \right \|^2 \\
    &+ \ \frac{1}{2} \left( \frac{\gamma \beta^2 \sqrt{\varepsilon(t)}}{a} - \beta - \beta^2 (\gamma - \alpha) \sqrt{\varepsilon(t)} \right) \| \nabla \Phi_{\lambda(t)} (x(t)) \|^2 \\
    &- \ \left \langle \nabla \varphi_{\varepsilon(t), \lambda(t)} (x(t)), \dot x(t) \right \rangle + \left( \frac{\beta \varepsilon^2(t)}{2} - \frac{\gamma \beta \dot \varepsilon(t) \sqrt{\varepsilon(t)}}{4} \right) \| x(t) \|^2 - \frac{\beta}{2} \| \nabla \varphi_{\varepsilon(t), \lambda(t)} (x(t)) \|^2 \\
    &+ \ \frac{\gamma \beta \dot \varepsilon(t) \sqrt{\varepsilon(t)}}{4} \| x_{\varepsilon(t), \lambda(t)} \|^2.
\end{align*}
Returning to the expression for $\dot E(t)$ we notice that the terms $\left \langle \nabla \varphi_{\varepsilon(t), \lambda(t)} (x(t)), \dot x(t) \right \rangle$ cancel each other out.
\begin{align*}
    \dot E(t) \ \leq \ &\left(\frac{\gamma \beta \dot \varepsilon(t)}{2 \sqrt{\varepsilon(t)}} - \gamma \sqrt{\varepsilon(t)} \right) \left( \varphi_{\varepsilon(t), \lambda(t)} (x(t)) - \varphi_{\varepsilon(t), \lambda(t)} (x_{\varepsilon(t), \lambda(t)}) \right) \\
    &+ \ \left( \frac{\gamma \dot \varepsilon(t)}{2 \sqrt{\varepsilon(t)}} + \gamma (\gamma - \alpha) \varepsilon(t) \right) \langle x(t) - x_{\varepsilon(t), \lambda(t)}, \dot x(t) \rangle \\
    &+ \ \left( \frac{\gamma^2 \dot \varepsilon(t)}{2} + \frac{\gamma^3 \varepsilon^{\frac{3}{2}}(t)}{2b} - \frac{\gamma \varepsilon^{\frac{3}{2}}(t)}{2} - \frac{\gamma \beta \dot \varepsilon(t) \sqrt{\varepsilon(t)}}{2} \right) \| x(t) - x_{\varepsilon(t), \lambda(t)} \|^2 \\
    &+ \ \left( \frac{\gamma}{a} + \gamma - \alpha \right) \frac{\sqrt{\varepsilon(t)}}{2} \| \dot x(t) \|^2 + \frac{(\gamma - \alpha) \sqrt{\varepsilon(t)}}{2} \| \dot x(t) + \beta \nabla \Phi_{\lambda(t)} (x(t)) \|^2 \\
    &+ \ \frac{\gamma (2a + b) \sqrt{\varepsilon(t)}}{2} \left \| \frac{d}{dt} x_{\varepsilon(t), \lambda(t)} \right \|^2 \\
    &+ \ \frac{1}{2} \left( \frac{\gamma \beta^2 \sqrt{\varepsilon(t)}}{a} - \beta - \beta^2 (\gamma - \alpha) \sqrt{\varepsilon(t)} - \dot \lambda(t) \right) \| \nabla \Phi_{\lambda(t)} (x(t)) \|^2 \\
    &+ \ \frac{1}{2} \left( \dot \lambda(t) \varepsilon^2(t) - \dot \varepsilon(t) + \frac{\gamma \beta \dot \varepsilon(t) \sqrt{\varepsilon(t)}}{2} \right) \| x_{\varepsilon(t), \lambda(t)} \|^2 \\
    &+ \ \left( \frac{\beta \varepsilon^2(t) + \dot \varepsilon(t)}{2} - \frac{\gamma \beta \dot \varepsilon(t) \sqrt{\varepsilon(t)}}{4} \right) \| x(t) \|^2 \\
    &- \ \frac{\beta}{2} \| \nabla \varphi_{\varepsilon(t), \lambda(t)} (x(t)) \|^2 \text{ for all } t \geq t_0.
\end{align*}
Let us now consider
\begin{align*}
    &\mu(t) E(t) \ = \ \mu(t) \left( \varphi_{\varepsilon(t), \lambda(t)} (x(t)) - \varphi_{\varepsilon(t), \lambda(t)} (x_{\varepsilon(t), \lambda(t)}) \right) \\
    &+ \ \frac{\mu(t)}{2} \left \| \gamma \sqrt{\varepsilon(t)} \left( x(t) - x_{\varepsilon(t), \lambda(t)} \right) + \dot x(t) + \beta \nabla \Phi_{\lambda(t)} (x(t)) \right \|^2 \\
    = \ &\mu(t) \left( \varphi_{\varepsilon(t), \lambda(t)} (x(t)) - \varphi_{\varepsilon(t), \lambda(t)} (x_{\varepsilon(t), \lambda(t)}) \right) + \frac{\gamma^2 \mu(t) \varepsilon(t)}{2} \| x(t) - x_{\varepsilon(t), \lambda(t)} \|^2 \\
    &+ \frac{\mu(t)}{2} \| \dot x(t) + \beta \nabla \Phi_{\lambda(t)} (x(t)) \|^2 + \gamma \mu(t) \sqrt{\varepsilon(t)} \left \langle x(t) - x_{\varepsilon(t), \lambda(t)}, \dot x(t) + \beta \nabla \Phi_{\lambda(t)} (x(t)) \right \rangle \\
    \leq \ &\mu(t) \left( \varphi_{\varepsilon(t), \lambda(t)} (x(t)) - \varphi_{\varepsilon(t), \lambda(t)} (x_{\varepsilon(t), \lambda(t)}) \right) + \gamma^2 \mu(t) \varepsilon(t) \| x(t) - x_{\varepsilon(t), \lambda(t)} \|^2 \\
    &+ \frac{\mu(t)}{2} \| \dot x(t) + \beta \nabla \Phi_{\lambda(t)} (x(t)) \|^2 + \gamma \mu(t) \sqrt{\varepsilon(t)} \left \langle x(t) - x_{\varepsilon(t), \lambda(t)}, \dot x(t) \right \rangle + \frac{\beta^2 \mu(t)}{2} \| \nabla \Phi_{\lambda(t)} (x(t)) \|^2,
\end{align*}
since
\begin{align*}
    &\gamma \beta \mu(t) \sqrt{\varepsilon(t)} \left \langle x(t) - x_{\varepsilon(t), \lambda(t)}, \nabla \Phi_{\lambda(t)} (x(t)) \right \rangle \ \leq \ \gamma \beta \mu(t) \sqrt{\varepsilon(t)} \| x(t) - x_{\varepsilon(t), \lambda(t)} \| \| \nabla \Phi_{\lambda(t)} (x(t)) \| \\
    \leq \ &\frac{\gamma^2 \mu(t) \varepsilon(t)}{2} \| x(t) - x_{\varepsilon(t), \lambda(t)} \|^2 + \frac{\beta^2 \mu(t)}{2} \| \nabla \Phi_{\lambda(t)} (x(t)) \|^2.
\end{align*}
Therefore, using $\mu(t) = \left( \alpha - \gamma \right) \sqrt{\varepsilon(t)} - \frac{\dot \varepsilon(t)}{2 \varepsilon(t)}$ (the terms with $\langle x(t) - x_{\varepsilon(t), \lambda(t)}, \dot x(t) \rangle$ disappear), we obtain for all $t \geq t_0$
\begin{align*}
    &\dot E(t) + \mu(t) E(t) \ \leq \ \left( \frac{\gamma \beta \dot \varepsilon(t)}{2 \sqrt{\varepsilon(t)}} + (\alpha - 2 \gamma) \sqrt{\varepsilon(t)} - \frac{\dot \varepsilon(t)}{2 \varepsilon(t)} \right) \left( \varphi_{\varepsilon(t), \lambda(t)} (x(t)) - \varphi_{\varepsilon(t), \lambda(t)} (x_{\varepsilon(t), \lambda(t)}) \right) \\
    &+ \ \left( \gamma^2 (\alpha - \gamma) \varepsilon^{\frac{3}{2}}(t) + \frac{\gamma^3 \varepsilon^{\frac{3}{2}}(t)}{2b} - \frac{\gamma \varepsilon^{\frac{3}{2}}(t)}{2} - \frac{\gamma \beta \dot \varepsilon(t) \sqrt{\varepsilon(t)}}{2} \right) \| x(t) - x_{\varepsilon(t), \lambda(t)} \|^2 \\
    &+ \ \left( \frac{\gamma}{a} + \gamma - \alpha \right) \frac{\sqrt{\varepsilon(t)}}{2} \| \dot x(t) \|^2 - \frac{\dot \varepsilon(t) }{4 \varepsilon(t)} \| \dot x(t) + \beta \nabla \Phi_{\lambda(t)} (x(t)) \|^2 \\
    &+ \ \frac{\gamma (2a + b) \sqrt{\varepsilon(t)}}{2} \left \| \frac{d}{dt} x_{\varepsilon(t), \lambda(t)} \right \|^2 \\
    &+ \ \frac{1}{2} \left( \frac{\gamma \beta^2 \sqrt{\varepsilon(t)}}{a} - \beta + 2 \beta^2 (\alpha - \gamma) \sqrt{\varepsilon(t)} - \frac{\beta^2 \dot \varepsilon(t)}{2 \varepsilon(t)} - \dot \lambda(t) \right) \| \nabla \Phi_{\lambda(t)} (x(t)) \|^2 \\
    &+ \ \frac{1}{2} \left( \dot \lambda(t) \varepsilon^2(t) - \dot \varepsilon(t) + \frac{\gamma \beta \dot \varepsilon(t) \sqrt{\varepsilon(t)}}{2} \right) \| x_{\varepsilon(t), \lambda(t)} \|^2 + \left( \frac{\beta \varepsilon^2(t) + \dot \varepsilon(t)}{2} - \frac{\gamma \beta \dot \varepsilon(t) \sqrt{\varepsilon(t)}}{4} \right) \| x(t) \|^2 \\
    &- \ \frac{\beta}{2} \| \nabla \varphi_{\varepsilon(t), \lambda(t)} (x(t)) \|^2.
\end{align*}
Further we have ($\dot \varepsilon(t) \leq 0$ for all $t \geq t_0$)
\[
- \frac{\dot \varepsilon(t) }{4 \varepsilon(t)} \| \dot x(t) + \beta \nabla \Phi_{\lambda(t)} (x(t)) \|^2 \ \leq \ - \frac{\dot \varepsilon(t) }{2 \varepsilon(t)} \| \dot x(t) \|^2 - \frac{\beta^2 \dot \varepsilon(t) }{2 \varepsilon(t)} \| \nabla \Phi_{\lambda(t)} (x(t)) \|^2.
\]
As we have established earlier by Lemma \ref{L_1} item \ref{ii_0} and \eqref{T_0}
\[
\left \| \frac{d}{dt} x_{\varepsilon(t), \lambda(t)} \right \|^2 \ \leq \ \left( \frac{2 \dot \lambda(t)}{\lambda(t)} - \frac{\dot \varepsilon(t)}{\varepsilon(t)} \right)^2 \| x_{\varepsilon(t), \lambda(t)} \|^2 \ \leq \ \left( \frac{2 \dot \lambda(t)}{\lambda(t)} - \frac{\dot \varepsilon(t)}{\varepsilon(t)} \right)^2 \| x^* \|^2,
\]
and since there exists $t_1 \geq t_0$ such that $\left( \sqrt{\varepsilon(t)} \to 0, \text{ as } t \to +\infty \right)$
\[
\dot \lambda(t) \varepsilon^2(t) + \left( \frac{\gamma \beta \sqrt{\varepsilon(t)}}{2} - 1 \right) \dot \varepsilon(t) \ \geq \ 0 \text{ for all } t \geq t_1,
\]
we deduce for all $t \geq t_1$
\[
\frac{1}{2} \left( \dot \lambda(t) \varepsilon^2(t) - \dot \varepsilon(t) + \frac{\gamma \beta \dot \varepsilon(t) \sqrt{\varepsilon(t)}}{2} \right) \| x_{\varepsilon(t), \lambda(t)} \|^2 \ \leq \ \frac{1}{2} \left( \dot \lambda(t) \varepsilon^2(t) - \dot \varepsilon(t) + \frac{\gamma \beta \dot \varepsilon(t) \sqrt{\varepsilon(t)}}{2} \right) \| x^* \|^2.
\]
Choosing $b = c \gamma$ with $c > 0$ we obtain for all $t \geq t_1$
\begin{align*}
    &\dot E(t) + \mu(t) E(t) \ \leq \ \left( \frac{\gamma \beta \dot \varepsilon(t)}{2 \sqrt{\varepsilon(t)}} + (\alpha - 2 \gamma) \sqrt{\varepsilon(t)} - \frac{\dot \varepsilon(t)}{2 \varepsilon(t)} \right) \left( \varphi_{\varepsilon(t), \lambda(t)} (x(t)) - \varphi_{\varepsilon(t), \lambda(t)} (x_{\varepsilon(t), \lambda(t)}) \right) \\
    &+ \ \left( \gamma^2 (\alpha - \gamma) \varepsilon^{\frac{3}{2}}(t) + \frac{\gamma^2 \varepsilon^{\frac{3}{2}}(t)}{2c} - \frac{\gamma \varepsilon^{\frac{3}{2}}(t)}{2} - \frac{\gamma \beta \dot \varepsilon(t) \sqrt{\varepsilon(t)}}{2} \right) \| x(t) - x_{\varepsilon(t), \lambda(t)} \|^2 \\
    &+ \ \left( \frac{\gamma}{a} + \gamma - \alpha - \frac{\dot \varepsilon(t)}{\varepsilon^{\frac{3}{2}}(t)} \right) \frac{\sqrt{\varepsilon(t)}}{2} \| \dot x(t) \|^2  \\
    &+ \ \frac{1}{2} \left( \frac{\gamma \beta^2 \sqrt{\varepsilon(t)}}{a} - \beta + 2 \beta^2 (\alpha - \gamma) \sqrt{\varepsilon(t)} - \frac{3 \beta^2 \dot \varepsilon(t)}{2 \varepsilon(t)} - \dot \lambda(t) \right) \| \nabla \Phi_{\lambda(t)} (x(t)) \|^2 \\
    &+ \ \left( \frac{\beta \varepsilon^2(t) + \dot \varepsilon(t)}{2} - \frac{\gamma \beta \dot \varepsilon(t) \sqrt{\varepsilon(t)}}{4} \right) \| x(t) \|^2 \\
    &+ \ \left( \frac{1}{2} \left( \dot \lambda(t) \varepsilon^2(t) - \dot \varepsilon(t) + \frac{\gamma \beta \dot \varepsilon(t) \sqrt{\varepsilon(t)}}{2} \right) + \frac{\gamma (2a + c \gamma) \sqrt{\varepsilon(t)}}{2} \left( \frac{2 \dot \lambda(t)}{\lambda(t)} - \frac{\dot \varepsilon(t)}{\varepsilon(t)} \right)^2 \right) \| x^* \|^2 \\
    &- \frac{\beta}{2} \| \nabla \varphi_{\varepsilon(t), \lambda(t)} (x(t)) \|^2.
\end{align*}
Let us investigate the signs of the terms in the inequality above when $t$ is large enough to satisfy what we assumed before \eqref{assumption_0} -- \eqref{assumption_3}. First of all,
\[
(\alpha - 2 \gamma) \sqrt{\varepsilon(t)} - \frac{\dot \varepsilon(t)}{2 \varepsilon(t)} + \frac{\gamma \beta \dot \varepsilon(t)}{2 \sqrt{\varepsilon(t)}} \ = \ \left( \frac{d}{dt} \left( \frac{1}{\sqrt{\varepsilon(t)}} \right) + \alpha - 2\gamma + \frac{\gamma \beta \dot \varepsilon(t)}{2 \varepsilon(t)} \right) \sqrt{\varepsilon(t)} \ \leq \ 0
\]
due to \eqref{assumption_0}. Secondly,
\begin{align*}
    &\gamma^2 (\alpha - \gamma) \varepsilon^{\frac{3}{2}}(t) + \frac{\gamma^2 \varepsilon^{\frac{3}{2}}(t)}{2c} - \frac{\gamma \varepsilon^{\frac{3}{2}}(t)}{2} - \frac{\gamma \beta \dot \varepsilon(t) \sqrt{\varepsilon(t)}}{2}
    = \ \frac{\gamma \varepsilon^{\frac{3}{2}}(t)}{2} \left( 2 \gamma (\alpha - \gamma)  + \frac{\gamma}{c} - 1 \right) - \frac{\gamma \beta \dot \varepsilon(t) \sqrt{\varepsilon(t)}}{2} \\ 
    = \ &\frac{\gamma \sqrt{\varepsilon(t)}}{2} \left( \left( 2 \gamma (\alpha - \gamma)  + \frac{\gamma}{c} - 1 \right) \varepsilon(t) - \beta \dot \varepsilon(t) \right) \ \leq \ 0
\end{align*}
due to \eqref{assumption_1}. Next we have
\[
\frac{\gamma}{a} + \gamma - \alpha - \frac{\dot \varepsilon(t)}{\varepsilon^{\frac{3}{2}}(t)} \ = \ \frac{d}{dt} \left( \frac{1}{\sqrt{\varepsilon(t)}} \right) + \gamma \frac{a + 1}{a} - \alpha \ \leq \ 0
\]
due to \eqref{assumption_0}. Then,
\begin{align*}
    \frac{\gamma \beta^2 \sqrt{\varepsilon(t)}}{a} - \beta + 2 \beta^2 (\alpha - \gamma) \sqrt{\varepsilon(t)} - \frac{3 \beta^2 \dot \varepsilon(t)}{2 \varepsilon(t)} - \dot \lambda(t) \ \leq \ 0
\end{align*}
due to \eqref{assumption_3}. Finally,
\[
\frac{\beta \varepsilon^2(t) + \dot \varepsilon(t)}{2} - \frac{\gamma \beta \dot \varepsilon(t) \sqrt{\varepsilon(t)}}{4} \ \leq \ 0
\]
due to \eqref{assumption_2}, since 
\[
2 \beta \varepsilon^2(t) + \left( 2 - \gamma \beta \sqrt{\varepsilon(t)} \right) \dot \varepsilon(t) \ \leq \ 0.
\]
So, at the end we deduce for all $t \geq t_1$
\begin{equation}\label{final_form}
\begin{split}
    \dot E(t) + \mu(t) E(t) \ \leq \ &\frac{1}{2} \left( \dot \lambda(t) \varepsilon^2(t) - \dot \varepsilon(t) + \frac{\gamma \beta \dot \varepsilon(t) \sqrt{\varepsilon(t)}}{2} + \gamma (2a + c \gamma) \sqrt{\varepsilon(t)} \left( \frac{2 \dot \lambda(t)}{\lambda(t)} - \frac{\dot \varepsilon(t)}{\varepsilon(t)} \right)^2 \right) \| x^* \|^2 \\
    &- \ \frac{\beta}{2} \| \nabla \varphi_{\varepsilon(t), \lambda(t)} (x(t)) \|^2 \ = \ \frac{g(t) \| x^* \|^2}{2} - \frac{\beta}{2} \| \nabla \varphi_{\varepsilon(t), \lambda(t)} (x(t)) \|^2.
\end{split}
\end{equation}

Integrating \eqref{final_form} from $t_1$ to $t$ we obtain
\[
E(t) - E(t_1) + \int_{t_1}^t \mu(s) E(s) ds + \frac{\beta}{2} \int_{t_1}^t \| \nabla \varphi_{\varepsilon(s)} (x(s)) \|^2 ds \ \leq \ \frac{\| x^* \|^2}{2} \int_{t_1}^t g(s) ds
\]
or, neglecting the positive terms,
\[
\frac{\beta}{2} \int_{t_1}^t \| \nabla \varphi_{\varepsilon(s)} (x(s)) \|^2 ds \ \leq \ E(t_1) + \frac{\| x^* \|^2}{2} \int_{t_1}^t g(s) ds.
\]

From \eqref{final_form} we also obtain for all $t \geq t_1$
\[
\dot E(t) + \mu(t) E(t) \ \leq \ \frac{g(t) \| x^* \|^2}{2}.
\]
Multiplying this with $\Gamma(t) = \exp \left( \int_{t_1}^t \mu(s) ds \right)$ and integrating again on $[t_1, t]$ we deduce
\[
E(t) \ \leq \ \frac{\| x^* \|^2}{2 \Gamma(t)} \int_{t_1}^t \Gamma(s) g(s) ds + \frac{\Gamma(t_1) E(t_1)}{\Gamma(t)}.
\]

\end{proof}

Now that we have the estimate for our energy function $E$, we would like to proceed with the main goal of this section, namely,

\begin{theorem}\label{CR}

Let $x: [t_0, +\infty) \longrightarrow H$ be a solution of \eqref{Syst_0}. Then for any $t \geq t_0$
\[
\Phi_{\lambda(t)} (x(t)) - \Phi^* \ \leq \ E(t) + \frac{\varepsilon(t)}{2} \| x^* \|^2,
\]
\[
\Phi \left( \prox\nolimits_{\lambda(t) \Phi}(x(t)) \right) - \Phi^* \ \leq \ E(t) + \frac{\varepsilon(t)}{2} \| x^* \|^2,
\]
\[
\| \prox\nolimits_{\lambda(t) \Phi} (x(t)) - x(t) \|^2 \ \leq \ 2\lambda(t) E(t) + \lambda(t)\varepsilon(t) \| x^* \|^2
\]
and
\[
\| x(t) - x_{\varepsilon(t), \lambda(t)} \|^2 \ \leq \ \frac{2 E(t)}{\varepsilon(t)}
\]
and the trajectory $x(t)$ converges strongly to $x^*$ as soon as $\lim_{t \to +\infty} \frac{E(t)}{\varepsilon(t)} = 0$.

\end{theorem}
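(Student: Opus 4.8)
The plan is to deduce all four estimates from a single master inequality, after which strong convergence is immediate. The key observation is that the squared-norm term in the definition \eqref{Energy_0} of $E(t)$ is nonnegative, so that for every $t \geq t_0$
\[
E(t) \ \geq \ \varphi_{\varepsilon(t), \lambda(t)} (x(t)) - \varphi_{\varepsilon(t), \lambda(t)} (x_{\varepsilon(t), \lambda(t)}).
\]
Everything else is bookkeeping with the $\varepsilon(t)$-strong convexity of $\varphi_{\varepsilon(t), \lambda(t)}$ and the elementary properties of the Moreau envelope recalled in Section 2.

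First I would prove the fourth estimate (the bound on $\|x(t) - x_{\varepsilon(t), \lambda(t)}\|^2$). Since $\varphi_{\varepsilon(t), \lambda(t)}$ is $\varepsilon(t)$-strongly convex with minimizer $x_{\varepsilon(t), \lambda(t)}$, we have $\varphi_{\varepsilon(t), \lambda(t)}(x(t)) - \varphi_{\varepsilon(t), \lambda(t)}(x_{\varepsilon(t), \lambda(t)}) \geq \frac{\varepsilon(t)}{2}\|x(t) - x_{\varepsilon(t), \lambda(t)}\|^2$; combined with the master inequality this gives $\|x(t) - x_{\varepsilon(t), \lambda(t)}\|^2 \leq 2E(t)/\varepsilon(t)$. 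For the first estimate I would use that $x^* \in \argmin\Phi = \argmin\Phi_{\lambda(t)}$, so $\prox\nolimits_{\lambda(t)\Phi}(x^*) = x^*$ and hence $\Phi_{\lambda(t)}(x^*) = \Phi^*$, which yields $\varphi_{\varepsilon(t), \lambda(t)}(x^*) = \Phi^* + \frac{\varepsilon(t)}{2}\|x^*\|^2$. Then, bounding $\varphi_{\varepsilon(t), \lambda(t)}(x_{\varepsilon(t), \lambda(t)}) \leq \varphi_{\varepsilon(t), \lambda(t)}(x^*)$ by minimality and discarding the nonnegative Tikhonov term via $\Phi_{\lambda(t)}(x(t)) \leq \varphi_{\varepsilon(t), \lambda(t)}(x(t))$, the master inequality produces $\Phi_{\lambda(t)}(x(t)) - \Phi^* \leq E(t) + \frac{\varepsilon(t)}{2}\|x^*\|^2$.

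The second and third estimates follow from the exact Moreau decomposition $\Phi_{\lambda(t)}(x(t)) = \Phi(\prox\nolimits_{\lambda(t)\Phi}(x(t))) + \frac{1}{2\lambda(t)}\|x(t) - \prox\nolimits_{\lambda(t)\Phi}(x(t))\|^2$, which holds because the infimum defining the envelope is attained at the proximal point. Since the squared term is nonnegative, $\Phi(\prox\nolimits_{\lambda(t)\Phi}(x(t))) \leq \Phi_{\lambda(t)}(x(t))$, so the first estimate immediately gives the second. Conversely, $\Phi(\prox\nolimits_{\lambda(t)\Phi}(x(t))) \geq \Phi^*$ lets me isolate $\frac{1}{2\lambda(t)}\|x(t) - \prox\nolimits_{\lambda(t)\Phi}(x(t))\|^2 \leq \Phi_{\lambda(t)}(x(t)) - \Phi^*$, and multiplying the first estimate by $2\lambda(t)$ yields the third.

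Finally, for strong convergence I would use the triangle inequality $\|x(t) - x^*\| \leq \|x(t) - x_{\varepsilon(t), \lambda(t)}\| + \|x_{\varepsilon(t), \lambda(t)} - x^*\|$: the first term tends to $0$ by the fourth estimate together with the hypothesis $E(t)/\varepsilon(t) \to 0$, and the second tends to $0$ by \eqref{t_0} of Lemma \ref{Tikhonov}. There is no serious obstacle here; the one point requiring care is the first estimate, where one must resist keeping the stray $\frac{\varepsilon(t)}{2}\|x(t)\|^2$ term and instead discard it through the one-sided bound $\Phi_{\lambda(t)}(x(t)) \leq \varphi_{\varepsilon(t), \lambda(t)}(x(t))$, so that the right-hand side collapses to exactly $E(t) + \frac{\varepsilon(t)}{2}\|x^*\|^2$ rather than to an expression still involving $\|x(t)\|$.
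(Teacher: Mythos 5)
Your proposal is correct and follows essentially the same route as the paper: the lower bound $E(t) \geq \varphi_{\varepsilon(t), \lambda(t)}(x(t)) - \varphi_{\varepsilon(t), \lambda(t)}(x_{\varepsilon(t), \lambda(t)})$, the minimality of $x_{\varepsilon(t), \lambda(t)}$ against $x^*$ together with $\Phi_{\lambda(t)}(x^*) = \Phi^*$, the exact Moreau decomposition for the second and third estimates, $\varepsilon(t)$-strong convexity for the fourth, and the triangle inequality with \eqref{t_0} for strong convergence. The only difference is cosmetic: you discard the $\frac{\varepsilon(t)}{2}\|x(t)\|^2$ term at the outset via $\Phi_{\lambda(t)}(x(t)) \leq \varphi_{\varepsilon(t), \lambda(t)}(x(t))$, whereas the paper carries $\frac{\varepsilon(t)}{2}(\|x^*\|^2 - \|x(t)\|^2)$ along and drops $-\frac{\varepsilon(t)}{2}\|x(t)\|^2$ at the end.
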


\begin{proof}

Consider
\begin{align*}
    \Phi_{\lambda(t)} (x(t)) - \Phi^* \ = \ &\varphi_{\varepsilon(t), \lambda(t)} (x(t)) - \varphi_{\varepsilon(t), \lambda(t)} (x^*) + \frac{\varepsilon(t)}{2} \left( \| x^* \|^2 - \| x(t) \|^2 \right) \\
    = \ &\varphi_{\varepsilon(t), \lambda(t)} (x(t)) - \varphi_{\varepsilon(t), \lambda(t)} (x_{\varepsilon(t), \lambda(t)}) + \varphi_{\varepsilon(t), \lambda(t)} (x_{\varepsilon(t), \lambda(t)}) - \varphi_{\varepsilon(t), \lambda(t)} (x^*) \\
    &+ \ \frac{\varepsilon(t)}{2} \left( \| x^* \|^2 - \| x(t) \|^2 \right) \\
    \leq \ &\varphi_{\varepsilon(t), \lambda(t)} (x(t)) - \varphi_{\varepsilon(t), \lambda(t)} (x_{\varepsilon(t), \lambda(t)}) + \frac{\varepsilon(t)}{2} \left( \| x^* \|^2 - \| x(t) \|^2 \right)
\end{align*}
Using the definition of $E$ we obtain
\[
\Phi_{\lambda(t)} (x(t)) - \Phi^* \ \leq \ E(t) + \frac{\varepsilon(t)}{2} \| x^* \|^2.
\]
By the definition of the proximal mapping
\[
\Phi_{\lambda(t)}(x(t)) - \Phi^* \ = \ \Phi \left( \prox\nolimits_{\lambda(t) \Phi}(x(t)) \right) - \Phi^* + \frac{1}{2\lambda(t)} \| \prox\nolimits_{\lambda(t) \Phi} (x(t)) - x(t) \|^2 \quad \forall t \geq t_0.
\]
Thus,
\[
\Phi \left( \prox\nolimits_{\lambda(t) \Phi}(x(t)) \right) - \Phi^* \ \leq \ E(t) + \frac{\varepsilon(t)}{2} \| x^* \|^2
\]
and
\[
\frac{1}{2\lambda(t)} \| \prox\nolimits_{\lambda(t) \Phi} (x(t)) - x(t) \|^2 \ \leq \ E(t) + \frac{\varepsilon(t)}{2} \| x^* \|^2.
\]
The second result immediately follows from the $\varepsilon(t)$-strong convexity of $\varphi_{\varepsilon(t), \lambda(t)}$:
\[
\varphi_{\varepsilon(t), \lambda(t)} (x(t)) - \varphi_{\varepsilon(t), \lambda(t)} (x_{\varepsilon(t), \lambda(t)}) \geq \frac{\varepsilon(t)}{2} \| x(t) - x_{\varepsilon(t), \lambda(t)} \|^2
\]
and thus
\[
E(t) \geq \frac{\varepsilon(t)}{2} \| x(t) - x_{\varepsilon(t), \lambda(t)} \|^2.
\]
Finally, by $\lim_{t \to +\infty} \varepsilon(t) = 0$ and \eqref{t_0} we deduce the strong convergence of the trajectories to $x^*$ as soon as $\lim_{t \to +\infty} \frac{E(t)}{\varepsilon(t)} = 0$.

\end{proof}

\section{Further analysis for the general parameters choice}

In this section we will show for the most general setting when the results of the Theorem \ref{CR} make sense, namely, when all the quantities on the right-hand side do converge to zero. Let us notice that since
    \begin{align*} 
        &\dot \lambda(t) \varepsilon^2(t) - \dot \varepsilon(t) + \frac{\gamma \beta \dot \varepsilon(t) \sqrt{\varepsilon(t)}}{2} + \gamma (2a + c \gamma) \sqrt{\varepsilon(t)} \left( \frac{2 \dot \lambda(t)}{\lambda(t)} - \frac{\dot \varepsilon(t)}{\varepsilon(t)} \right)^2 \\
        \leq \ &\dot \lambda(t) \varepsilon^2(t) - \dot \varepsilon(t) + \gamma (2a + c \gamma) \sqrt{\varepsilon(t)} \left( \frac{2 \dot \lambda(t)}{\lambda(t)} - \frac{\dot \varepsilon(t)}{\varepsilon(t)} \right)^2,
    \end{align*}
    we can simplify a bit the analysis of this section, due to
    \begin{align*}
        &\int_{t_1}^t \left( \dot \lambda(s) \varepsilon^2(s) - \dot \varepsilon(s) + \frac{\gamma \beta \dot \varepsilon(s) \sqrt{\varepsilon(s)}}{2} + \gamma (2a + c \gamma) \sqrt{\varepsilon(s)} \left( \frac{2 \dot \lambda(s)}{\lambda(s)} - \frac{\dot \varepsilon(s)}{\varepsilon(s)} \right)^2 \right) \Gamma(s) ds \\
        \leq \ &\int_{t_1}^t \left( \dot \lambda(s) \varepsilon^2(s) - \dot \varepsilon(s) + \gamma (2a + c \gamma) \sqrt{\varepsilon(s)} \left( \frac{2 \dot \lambda(s)}{\lambda(s)} - \frac{\dot \varepsilon(s)}{\varepsilon(s)} \right)^2 \right) \Gamma(s) ds.
    \end{align*}

\subsection{The asymptotic behaviour of the function $\Gamma$}

Let us start with the function $\Gamma(t) \ = \ \exp \left( \int_{t_1}^t \mu(s) ds \right)$ for $\mu(t) = -\frac{\dot \varepsilon(t)}{2 \varepsilon(t)} + \left( \alpha - \gamma \right) \sqrt{\varepsilon(t)}$.
\begin{align*}
    \Gamma(t) \ &= \ \exp \left( -\frac{1}{2} \int_{t_1}^t \frac{\dot \varepsilon(s)}{\varepsilon(s)} ds + (\alpha - \gamma) \int_{t_1}^t \sqrt{\varepsilon(s)} ds \right) \ = \   \exp \left( \frac{1}{2} \ln \frac{\varepsilon(t_1)}{\varepsilon(t)} + (\alpha - \gamma) \int_{t_1}^t \sqrt{\varepsilon(s)} ds \right) \\
    &= \ \sqrt{\frac{\varepsilon(t_1)}{\varepsilon(t)}} \exp \left( (\alpha - \gamma) \int_{t_1}^t \sqrt{\varepsilon(s)} ds \right).
\end{align*}
Since $\varepsilon(t)$ is positive for all $t \geq t_1 \geq t_0$, the integral is nonnegative and the whole exponent is lower bounded by $1$. Using the property of Tikhonov function, namely, $\lim_{t \to +\infty} \varepsilon(t) = 0$, we deduce that 
\[
\Gamma(t) \ \geq \ \sqrt{\frac{\varepsilon(t_1)}{\varepsilon(t)}} \to +\infty \text{ as } t \to +\infty.
\]

\subsection{The asymptotic behaviour of the function $E$}

Assume the following:
\begin{equation}\label{Cond_0}
\begin{split}
    &\lim_{t \to +\infty} \dot \lambda(t) \varepsilon^\frac{3}{2}(t) \ = \ 0 \text{ and } \\
    &\lim_{t \to +\infty} \frac{\dot \lambda(t)}{\lambda(t)} \ = \ 0.
\end{split}
\end{equation}
Let us recall the form of the energy function 
\begin{align*}
    E(t) = \varphi_{\varepsilon(t), \lambda(t)} (x(t)) - \varphi_{\varepsilon(t), \lambda(t)} (x_{\varepsilon(t), \lambda(t)}) + \frac{1}{2} \left \| \gamma \sqrt{\varepsilon(t)} \left( x(t) - x_{\varepsilon(t), \lambda(t)} \right) + \dot x(t) + \beta \nabla \Phi_{\lambda(t)} (x(t)) \right \|^2,
\end{align*}
where $\frac{\alpha}{2} \leq \gamma < \alpha$. Let us study the behaviour of the function
\[
\frac{\int_{t_1}^t \left[ \left( \dot \lambda(s) \varepsilon^2(s) - \dot \varepsilon(s) + \frac{\big( 2 \dot \lambda(s) \varepsilon(s) - \lambda(s) \dot \varepsilon(s) \big)^2 (2a + c \gamma) \gamma}{\lambda^2(s) \varepsilon^{\frac{3}{2}}(s)} \right) \exp\left( \int_{t_1}^s \mu(u) du \right) \right] ds}{\exp\left( \int_{t_1}^t \mu(u) du \right)} \ = \ \frac{\int_{t_1}^t \tilde{g}(s) \Gamma(s) ds}{\Gamma(t)},
\]
as $t \to +\infty$. Since $\tilde{g}(t) \Gamma(t) \ \geq \ 0$ for all $t \geq t_1$ so is the integral $\int_{t_1}^t \tilde{g}(s) \Gamma(s) ds$. If there exists a constant such that $0 \ \leq \ \int_{t_1}^t \tilde{g}(s) \Gamma(s) ds \ \leq \ const$, then $E(t)$ goes to zero as $t \to +\infty$ due to the properties of $h$ and Theorem \ref{Main}. Otherwise, we may apply L'Hospital's rule to obtain
\begin{align*}
    &\lim_{t \to +\infty} \frac{\int_{t_1}^t \tilde{g}(s) \Gamma(s) ds}{\Gamma(t)} \ = \ \lim_{t \to +\infty} \frac{\tilde{g}(t)}{\mu(t)} \\
    = \ &\lim_{t \to +\infty} \left( \frac{2 \dot \lambda(t) \varepsilon^3(t) - 2 \varepsilon(t) \dot \varepsilon(t)}{2(\alpha - \gamma) \varepsilon^\frac{3}{2}(t) - \dot \varepsilon(t)} + \frac{2 \varepsilon(t) \big( 2 \dot \lambda(t) \varepsilon(t) - \lambda(t) \dot \varepsilon(t) \big)^2 (a + c \gamma) \gamma}{\lambda^2(t) \varepsilon^{\frac{3}{2}}(t) \left( 2(\alpha - \gamma) \varepsilon^\frac{3}{2}(t) - \dot \varepsilon(t) \right)} \right),
\end{align*}
if the latest exists, which we are going to show now. Consider
\[
\frac{2 \dot \lambda(t) \varepsilon^3(t) - 2 \varepsilon(t) \dot \varepsilon(t)}{2(\alpha - \gamma) \varepsilon^\frac{3}{2}(t) - \dot \varepsilon(t)} \ = \ 2 \frac{\dot \lambda(t) \varepsilon^\frac{3}{2}(t) - \frac{\dot \varepsilon(t)}{\sqrt{\varepsilon(t)}}}{2(\alpha - \gamma) - \frac{\dot \varepsilon(t)}{\varepsilon^\frac{3}{2}(t)}} \ \leq \ \frac{\dot \lambda(t) \varepsilon^\frac{3}{2}(t) - \frac{\dot \varepsilon(t)}{\sqrt{\varepsilon(t)}}}{\alpha - \gamma},
\]
since $- \frac{\dot \varepsilon(t)}{\varepsilon^\frac{3}{2}(t)} \geq 0$. Notice that
\[
\lim_{t \to +\infty} \frac{-\dot \varepsilon(t)}{\sqrt{\varepsilon(t)}} \ = \ 0 \text{ by } \eqref{assumption_0}, \text{ since } 0 \ \leq \ -\frac{\dot \varepsilon(t)}{\sqrt{\varepsilon(t)}} \ \leq \ 2 \left( \alpha - \gamma \frac{a + 1}{a} \right) \varepsilon(t) \to 0, \text{ as } t \to +\infty.
\]
So, by \eqref{Cond_0} we deduce
\[
\lim_{t \to +\infty} \frac{2 \dot \lambda(t) \varepsilon^3(t) - 2 \varepsilon(t) \dot \varepsilon(t)}{2(\alpha - \gamma) \varepsilon^\frac{3}{2}(t) - \dot \varepsilon(t)} \ = \ 0.
\]
Consider now
\begin{align*}
    \frac{2 \big( 2 \dot \lambda(t) \varepsilon(t) - \lambda(t) \dot \varepsilon(t) \big)^2 (2a + c \gamma) \gamma}{\lambda^2(t) \sqrt{\varepsilon(t)} \left( 2(\alpha - \gamma) \varepsilon^\frac{3}{2}(t) - \dot \varepsilon(t) \right)} \ &\leq \ \frac{2 \big( 2 \dot \lambda(t) \varepsilon(t) - \lambda(t) \dot \varepsilon(t) \big)^2 (2a + c \gamma) \gamma}{2(\alpha - \gamma) \lambda^2(t) \varepsilon^2(t)} \\
    &= \ \frac{(2a + c \gamma) \gamma}{\alpha - \gamma} \left( \frac{2 \dot \lambda(t)}{\lambda(t)} - \frac{\dot \varepsilon(t)}{\varepsilon(t)} \right)^2.
\end{align*}
Again, by \eqref{assumption_0} we know that
\[
0 \ \leq \ -\frac{\dot \varepsilon(t)}{\varepsilon(t)} \ \leq \ 2 \left( \alpha - \gamma \frac{a + 1}{a} \right) \sqrt{\varepsilon(t)} \to 0, \text{ as } t \to +\infty.
\]
So, again using \eqref{Cond_0} we deduce that $\lim_{t \to +\infty} E(t) \ = \ 0$.

\subsection{The asymptotic behaviour of the function $\frac{E}{\varepsilon}$}

Let us assume additionally that
\begin{equation}\label{Cond_1}
\begin{split}
    &\lim_{t \to +\infty} \sqrt{\varepsilon(t)} \exp \left( (\alpha - \gamma) \int_{t_1}^t \sqrt{\varepsilon(s)} ds \right) \ = \ +\infty, \\
    &\lim_{t \to +\infty} \frac{\dot \varepsilon(t)}{\varepsilon^\frac{3}{2}(t)} \ = \ 0, \\
    &\lim_{t \to +\infty} \dot \lambda(t) \sqrt{\varepsilon(t)} \ = \ 0 \text{ and } \\
    &\lim_{t \to +\infty} \frac{\dot \lambda(t)}{\lambda(t) \sqrt{\varepsilon(t)}} \ = \ 0.
\end{split}
\end{equation}
In the same spirit let us analyse the asymptotic behaviour of $\frac{E(t)}{\varepsilon(t)}$ as $t \to +\infty$. From Theorem \ref{Main} we know that
\[
\frac{E(t)}{\varepsilon(t)} \leq \frac{\int_{t_1}^t \tilde{g}(s) \Gamma(s) ds}{\varepsilon(t) \Gamma(t)} \| x^* \| + \frac{E(t_1)}{\varepsilon(t) \Gamma(t)}.
\]
By \eqref{Cond_1} we immediately deduce that $\lim_{t \to +\infty} \frac{E(t_1)}{\varepsilon(t) \Gamma(t)} = 0$. For the first term let us use the same technique as in the previous chapter and apply L'Hospital's rule to obtain
\begin{align*}
    &\lim_{t \to +\infty} \frac{\int_{t_1}^t \tilde{g}(s) \Gamma(s) ds}{\varepsilon(t) \Gamma(t)} \ = \ \lim_{t \to +\infty} \frac{\tilde{g}(t) \Gamma(t)}{\dot \varepsilon(t) \Gamma(t) + \mu(t) \varepsilon(t) \Gamma(t)} \ = \ \lim_{t \to +\infty} \frac{\tilde{g}(t)}{\dot \varepsilon(t) + \mu(t) \varepsilon(t)} \\
    = \ &\lim_{t \to +\infty} \left( \dot \lambda(t) \varepsilon^2(t) - \dot \varepsilon(t) + \frac{\big( 2 \dot \lambda(t) \varepsilon(t) - \lambda(t) \dot \varepsilon(t) \big)^2 (2a + c \gamma) \gamma}{\lambda^2(t) \varepsilon^{\frac{3}{2}}(t)} \right) \frac{1}{\dot \varepsilon(t) + \mu(t) \varepsilon(t)} \\
    = \ &\lim_{t \to +\infty} \left( \dot \lambda(t) \varepsilon^2(t) - \dot \varepsilon(t) + \frac{\big( 2 \dot \lambda(t) \varepsilon(t) - \lambda(t) \dot \varepsilon(t) \big)^2 (2a + c \gamma) \gamma}{\lambda^2(t) \varepsilon^{\frac{3}{2}}(t)} \right) \frac{1}{\frac{\dot \varepsilon(t)}{2} + (\alpha - \gamma) \varepsilon^\frac{3}{2}(t)} \\
    = \ &\lim_{t \to +\infty} \left( \frac{\dot \lambda(t) \varepsilon^2(t) - \dot \varepsilon(t)}{\frac{\dot \varepsilon(t)}{2} + (\alpha - \gamma) \varepsilon^\frac{3}{2}(t)} + \frac{\big( 2 \dot \lambda(t) \varepsilon(t) - \lambda(t) \dot \varepsilon(t) \big)^2 (2a + c \gamma) \gamma}{\lambda^2(t) \varepsilon^{\frac{3}{2}}(t) \left( \frac{\dot \varepsilon(t)}{2} + (\alpha - \gamma) \varepsilon^\frac{3}{2}(t) \right)} \right) \\
    = \ &\lim_{t \to +\infty} \frac{\dot \lambda(t) \lambda^2(t) \varepsilon^{\frac{7}{2}}(t) - \dot \varepsilon(t) \lambda^2(t) \varepsilon^{\frac{3}{2}}(t) + \big( 2 \dot \lambda(t) \varepsilon(t) - \lambda(t) \dot \varepsilon(t) \big)^2 (2a + c \gamma) \gamma}{\frac{\lambda^2(t) \varepsilon^{\frac{3}{2}}(t) \dot \varepsilon(t)}{2} + (\alpha - \gamma) \lambda^2(t) \varepsilon^3(t)} \\
    = \ &\lim_{t \to +\infty} \frac{\dot \lambda(t) \sqrt{\varepsilon(t)} - \frac{\dot \varepsilon(t)}{\varepsilon^{\frac{3}{2}}(t)} + \left( \frac{2 \dot \lambda(t) \varepsilon(t) - \lambda(t) \dot \varepsilon(t)}{\lambda(t) \varepsilon^\frac{3}{2}(t)} \right)^2 (2a + c \gamma) \gamma}{\frac{\dot \varepsilon(t)}{2 \varepsilon^{\frac{3}{2}}(t) } + \alpha - \gamma} \ = \ 0 \text{ by } \eqref{Cond_1}.
\end{align*}
Thus, we have established that $\lim_{t \to +\infty} \frac{E(t)}{\varepsilon(t)} \ = \ 0$.

\subsection{The asymptotic behaviour of the function $\lambda E$}

In this section we need to assume the full set of conditions \eqref{Cond_1} again. We will study the behaviour of $\lambda(t) E(t)$, as $t \to +\infty$. Again, from Theorem \ref{Main} we know that
\[
\lambda(t) E(t) \leq \frac{\lambda(t) \int_{t_1}^t \tilde{g}(s) \Gamma(s) ds}{\Gamma(t)} \| x^* \|  + \frac{E(t_1) \lambda(t)}{\Gamma(t)}.
\]
We immediately obtain that $\frac{E(t_1) \lambda(t)}{\Gamma(t)} \to 0$ as $t \to +\infty$, since
\[
\lim_{t \to +\infty} \frac{\exp \left( (\alpha - \gamma) \int_{t_1}^t \sqrt{\varepsilon(s)} ds \right)}{\lambda(t) \sqrt{\varepsilon(t)}} \ = \ \lim_{t \to +\infty} \frac{\sqrt{\varepsilon(t)} \exp \left( (\alpha - \gamma) \int_{t_1}^t \sqrt{\varepsilon(s)} ds \right)}{\lambda(t) \varepsilon(t)} \ = \ +\infty \text{ by } \eqref{A_0} \text{ and } \eqref{Cond_1}.
\]
Arguing in the same way we deduce for the first term
\[
\lim_{t \to +\infty} \frac{\int_{t_1}^t \tilde{g}(s) \Gamma(s) ds}{\frac{\Gamma(t)}{\lambda(t)}} \ = \ \lim_{t \to +\infty} \frac{\tilde{g}(t) \Gamma(t)}{\frac{\mu(t) \Gamma(t) \lambda(t) - \Gamma(t) \dot \lambda(t)}{\lambda^2(t)}} \ = \ \lim_{t \to +\infty} \frac{\tilde{g}(t) \lambda^2(t)}{\mu(t) \lambda(t) - \dot \lambda(t)}.
\]
Consider
\begin{align*}
    &\frac{\tilde{g}(t) \lambda^2(t)}{\mu(t) \lambda(t) - \dot \lambda(t)} \ = \ \frac{\left( \dot \lambda(t) \varepsilon^2(t) - \dot \varepsilon(t) + \frac{\big( 2 \dot \lambda(t) \varepsilon(t) - \lambda(t) \dot \varepsilon(t) \big)^2 (2a + c \gamma) \gamma}{\lambda^2(t) \varepsilon^{\frac{3}{2}}(t)} \right) \lambda^2(t)}{-\frac{\dot \varepsilon(t) \lambda(t)}{2 \varepsilon(t)} + \left( \alpha - \gamma \right) \sqrt{\varepsilon(t)} \lambda(t) - \dot \lambda(t)} \\
    = \ &2\frac{\dot \lambda(t) \varepsilon^3(t) \sqrt{\varepsilon(t)} \lambda^2(t) - \dot \varepsilon(t) \varepsilon(t) \sqrt{\varepsilon(t)} \lambda^2(t) + \big( 2 \dot \lambda(t) \varepsilon(t) - \lambda(t) \dot \varepsilon(t) \big)^2 (2a + c \gamma) \gamma}{-\dot \varepsilon(t) \sqrt{\varepsilon(t)} \lambda(t) + 2 \left( \alpha - \gamma \right) \varepsilon^2(t) \lambda(t) - 2 \dot \lambda(t) \varepsilon(t) \sqrt{\varepsilon(t)}} \\
    = \ &2\frac{\dot \lambda(t) \varepsilon^\frac{3}{2}(t) \lambda(t) - \frac{\dot \varepsilon(t) \lambda(t)}{\sqrt{\varepsilon(t)}} + \left( \frac{2 \dot \lambda(t) \varepsilon(t) - \lambda(t) \dot \varepsilon(t)}{\varepsilon(t) \sqrt{\lambda(t)}} \right)^2 (2a + c \gamma) \gamma}{\frac{-\dot \varepsilon(t)}{\varepsilon^\frac{3}{2}(t)} + 2 \left( \alpha - \gamma \right) - \frac{2 \dot \lambda(t)}{\lambda(t) \sqrt{\varepsilon(t)}}} \to 0 \text{ as } t \to +\infty \text{ by } \eqref{A_0}, \eqref{assumption_0} \text{ and } \eqref{Cond_1},
\end{align*}
since
\[
0 \ \leq \ \dot \lambda(t) \varepsilon^\frac{3}{2}(t) \lambda(t) \ = \ \dot \lambda(t) \sqrt{\varepsilon(t)} \varepsilon(t) \lambda(t) \to 0, \text{ as } t \to +\infty,
\]
\[
0 \ \leq \ \frac{-\dot \varepsilon(t) \lambda(t)}{\sqrt{\varepsilon(t)}} \ \leq \ 2 \left( \alpha - \gamma \frac{a + 1}{a} \right) \lambda(t) \varepsilon(t) \to 0, \text{ as } t \to +\infty,
\]
\[
0 \ \leq \ \frac{-\dot \varepsilon(t) \sqrt{\lambda(t)}}{\varepsilon(t)} \ \leq \ 2 \left( \alpha - \gamma \frac{a + 1}{a} \right) \sqrt{\varepsilon(t) \lambda(t)} \to 0, \text{ as } t \to +\infty
\]
and
\[
\frac{\dot \lambda(t)}{\sqrt{\lambda(t)}} \to 0, \text{ as } t \to +\infty.
\]
Thus, we have established that $\lim_{t \to +\infty} \frac{E(t)}{\lambda(t)} \ = \ 0$.

\section{Analysis of the conditions}

Let us gather in this section all the conditions that we made in our analysis and show that they all could be satisfied at the same time at least for the polynomial choice of parameters.
\begin{enumerate}[label=(\roman*)]
    
    \item $ \lim_{t \to +\infty} \lambda(t) \varepsilon(t) \ = \ 0 $;

    Suppose that there exist $\frac{\alpha}{2} \leq \gamma < \alpha, \ a > 0 \text{ and } c > 0$ such that for all $t$ large enough it holds that

    \item $ \frac{d}{dt} \left( \frac{1}{\sqrt{\varepsilon(t)}} \right) \ \leq \ \min \left\{ 2 \gamma - \alpha - \frac{\gamma \beta \dot \varepsilon(t)}{2 \varepsilon(t)}, \ \alpha - \gamma \frac{a + 1}{a} \right\} $;

    \item $ \left( 2 \gamma (\alpha - \gamma)  + \frac{\gamma}{c} - 1 \right) \varepsilon(t) - \beta \dot \varepsilon(t) \ \leq \ 0 $;      
    \item $ 2 \beta \varepsilon^2(t) + \left( 2 - \gamma \beta \sqrt{\varepsilon(t)} \right) \dot \varepsilon(t) \ \leq \ 0 $

    and

    \item $ \left( \frac{\gamma}{a} + 2 (\alpha - \gamma) \right) \beta^2 \sqrt{\varepsilon(t)} - \frac{3 \beta^2 \dot \varepsilon(t)}{2 \varepsilon(t)} - \dot \lambda(t) \ \leq \ \beta $.
    
\end{enumerate}

\subsection{Polynomial choice of parameters}

Let us take $\lambda(t) = t^l$ and $\varepsilon(t) = \frac{1}{t^d}$, $l \geq 0$ and $d > 0$. The set of the conditions in this case becomes

\begin{enumerate}[label=(\roman*)]
    
    \item $ \lim_{t \to +\infty} t^{l-d} \ = \ 0 $;

    There exist $\frac{\alpha}{2} \leq \gamma < \alpha, \ a > 0 \text{ and } c > 0$ such that for all $t$ large enough

    \item $ \frac{d}{2} t^{\frac{d}{2}-1} \ \leq \ \min \left\{ 2 \gamma - \alpha + \frac{\gamma \beta d}{2t}, \ \alpha - \gamma \frac{a + 1}{a} \right\} $;

    \item $ \left( 2 \gamma (\alpha - \gamma)  + \frac{\gamma}{c} - 1 \right) \frac{1}{t^d} + \frac{d \beta}{t^{d+1}} \ \leq \ 0 $; 
    
    \item $ \frac{2 \beta}{t^{2d}} - d \left( 2 - \frac{\gamma \beta}{t^\frac{d}{2}} \right) \frac{1}{t^{d+1}} \ \leq \ 0 $

    and

    \item $ \left( \frac{\gamma}{a} + 2 (\alpha - \gamma) \right) \beta^2 \frac{1}{t^\frac{d}{2}} + \frac{3d \beta^2}{2 t} - l t^{l-1} \ \leq \ \beta $.
    
\end{enumerate}

The conditions above are, in turn, equivalent to

\begin{enumerate}[label=(\roman*)]
    
    \item $ l < d $;

    \item $d \leq 2$;

    \item\label{3} $ 2 \gamma (\alpha - \gamma)  \ < \ 1 $; 
    
    \item $d \geq 1$

    and

    \item is always satisfied starting from $t$ large enough.
    
\end{enumerate}

Finally, we deduce for $l$ and $d$
\begin{align*}
    &1 \leq d \leq 2, \\
    &0 \leq l < d.
\end{align*}

\begin{remark}
    Condition \ref{3} does not contradict with the choice of $\gamma$, namely, $\gamma$ could be chosen to satisfy both of them at the same time:
    \[
    2 \gamma (\alpha - \gamma)  \ < \ 1 \text{ and } \frac{\alpha}{2} \leq \gamma < \alpha.
    \]
    Indeed, \ref{3} implies
    \[
    \gamma^2 - \alpha \gamma + \frac{1}{2} > 0.
    \]
    If $\alpha < \sqrt{2}$, then $\gamma^2 - \alpha \gamma + \frac{1}{2}$ is always positive and we are free to choose $\gamma$ such that $\frac{\alpha}{2} \leq \gamma < \alpha$. Otherwise, $\alpha \geq \sqrt{2}$ means that
    \[
    \gamma \ < \ \frac{\alpha - \sqrt{\alpha^2 - 2}}{2} \text{ or } \gamma \ > \ \frac{\alpha + \sqrt{\alpha^2 - 2}}{2}
    \]
    and thus we take
    \[
    \frac{\alpha}{2} \ \leq \ \frac{\alpha + \sqrt{\alpha^2 - 2}}{2} \ < \ \gamma \ < \ \alpha.
    \]
\end{remark}

\section{The precise rates of convergence of the values and trajectories}

Previously, in Theorem \ref{Main}, we have obtained
\[
E(t) \ \leq \ \frac{\| x^* \|^2}{2 \Gamma(t)} \int_{t_1}^t \Gamma(s) g(s) ds + \frac{\Gamma(t_1) E(t_1)}{\Gamma(t)},
\]
where $g(t) = \dot \lambda(t) \varepsilon^2(t) - \dot \varepsilon(t) + \frac{\gamma \beta \dot \varepsilon(t) \sqrt{\varepsilon(t)}}{2} + \gamma (2a + c \gamma) \sqrt{\varepsilon(t)} \left( \frac{2 \dot \lambda(t)}{\lambda(t)} - \frac{\dot \varepsilon(t)}{\varepsilon(t)} \right)^2$, $\Gamma(t) = \exp \left( \int_{t_1}^t \mu(s) ds \right)$ and $\mu(t) = \left( \alpha - \gamma \right) \sqrt{\varepsilon(t)} - \frac{\dot \varepsilon(t)}{2 \varepsilon(t)}$. Now, let us deduce the actual rates of convergence of the function values and trajectories for the same polynomial choice of parameter functions $\lambda(t) = t^l$ and $\varepsilon(t) = \frac{1}{t^d}$, $l \geq 0$ and $d > 0$.

\subsection{The functions $\mu$ and $\Gamma$}

Let us consider the case when $1 \leq d < 2$. The case when $d = 2$ will be treated separately. The function $\mu$ thus writes as follows
$\mu(t) \ = \ \frac{\alpha - \gamma}{t^{\frac{d}{2}}} + \frac{d}{2t}$. Then,
\begin{align*}
    \Gamma(t) \ &= \ \exp \left( \int_{t_1}^t \left[ \frac{\alpha - \gamma}{s^{\frac{d}{2}}} + \frac{d}{2s} \right] ds \right) \ = \ \left( \frac{t}{t_1} \right)^\frac{d}{2} \exp \left( \int_{t_1}^t \frac{\alpha - \gamma}{s^{\frac{d}{2}}} ds \right) \\
    &= \ \left( \frac{t}{t_1} \right)^\frac{d}{2} \exp \left( \frac{\alpha - \gamma}{1 - \frac{d}{2}} \left[ t^{1 - \frac{d}{2}} - t_1^{1 - \frac{d}{2}} \right] \right) \ = \ C t^\frac{d}{2} \exp \left( \frac{\alpha - \gamma}{1 - \frac{d}{2}} t^{1 - \frac{d}{2}} \right),
\end{align*}
where $C = \left( t_1^\frac{d}{2} \exp \left[ \frac{\alpha - \gamma}{1 - \frac{d}{2}}  t_1^{1 - \frac{d}{2}} \right] \right)^{-1}$. So, $\frac{\Gamma(t_1) E(t_1)}{\Gamma(t)}$ goes to zero exponentially, as time goes to infinity due to $1 \leq d < 2$.

\subsection{The function $g$}

First notice that
\begin{align*}
    g(t) \ &= \ \frac{l t^{l-1}}{t^{2d}} + \frac{d}{t^{d+1}} - \frac{\gamma \beta d}{2 t^{d + 1 + \frac{d}{2}}} + \frac{\gamma (2a + c \gamma)}{t^\frac{d}{2}} \left( \frac{2l}{t} + \frac{d}{t} \right)^2 \\
    &= \ l t^{l - 1 - 2d} + \frac{d}{t^{d+1}} - \frac{\gamma \beta d}{2 t^{d + 1 + \frac{d}{2}}} + \frac{\gamma (2a + c \gamma) (2l + d)^2}{t^{\frac{d}{2} + 2}} \\
    &= \ \frac{l}{t^{2d-l+1}} + \frac{d}{t^{d+1}} - \frac{\gamma \beta d}{2 t^{d + 1 + \frac{d}{2}}} + \frac{C_1}{t^{\frac{d}{2} + 2}},
\end{align*}
where $C_1 = \gamma (2a + c \gamma) (2l + d)^2$. Then,
\[
\Gamma(t) g(t) \ = \ C \left( \frac{l}{t^{\frac{3d}{2} + 1 - l}} + \frac{d}{t^{\frac{d}{2}+1}} - \frac{\gamma \beta d}{2 t^{d+1}} + \frac{C_1}{t^2} \right) \exp \left( \frac{\alpha - \gamma}{1 - \frac{d}{2}} t^{1 - \frac{d}{2}} \right).
\]
Let us notice that the behaviour of $\frac{l}{t^{\frac{3d}{2} + 1 - l}} + \frac{d}{t^{\frac{d}{2}+1}} - \frac{\gamma \beta d}{2 t^{d+1}} + \frac{C_1}{t^2}$ is dictated by the term $\frac{1}{t^{\frac{d}{2}+1}}$, as $t \to +\infty$, since, as we have established earlier, $1 \leq d < 2$ and $0 \leq l < d$.

\subsection{Integrating the product $\Gamma g$}

The technique, which will be used in this section, is inspired by \cite{ABCR_0}. First of all, notice that for some $\delta > 0$
\[
\frac{d}{dt} \left( \frac{\exp \left( \frac{\alpha - \gamma}{1 - \frac{d}{2}} t^{1 - \frac{d}{2}} \right)}{\delta t} \right) \ = \ \left( -\frac{1}{\delta t^2} + \frac{\alpha - \gamma}{\delta t^{\frac{d}{2}+1}} \right) \exp \left( \frac{\alpha - \gamma}{1 - \frac{d}{2}} t^{1 - \frac{d}{2}} \right).
\]
Secondly, there exists such $\delta$ that starting from some $t_2 \geq t_1$ it holds that
\[
\frac{l}{t^{\frac{3d}{2} + 1 - l}} + \frac{d}{t^{\frac{d}{2}+1}} - \frac{\gamma \beta d}{2 t^{d+1}} + \frac{C_1}{t^2} \ \leq \ -\frac{1}{\delta t^2} + \frac{\alpha - \gamma}{\delta t^{\frac{d}{2}+1}}.
\]
Thus,
\begin{align*}
    &C \int_{t_2}^t \left( \frac{l}{s^{\frac{3d}{2} + 1 - l}} + \frac{d}{s^{\frac{d}{2}+1}} - \frac{\gamma \beta d}{2 s^{d+1}} + \frac{C_1}{s^2} \right) \exp \left( \frac{\alpha - \gamma}{1 - \frac{d}{2}} s^{1 - \frac{d}{2}} \right) ds \\
    \leq \ &C \int_{t_2}^t \left( -\frac{1}{\delta s^2} + \frac{\alpha - \gamma}{\delta s^{\frac{d}{2}+1}} \right) \exp \left( \frac{\alpha - \gamma}{1 - \frac{d}{2}} s^{1 - \frac{d}{2}} \right) ds \\
    = \ &C \int_{t_2}^t \frac{d}{ds} \left( \frac{\exp \left( \frac{\alpha - \gamma}{1 - \frac{d}{2}} s^{1 - \frac{d}{2}} \right)}{\delta s} \right) ds \ = \ C \left( \frac{\exp \left( \frac{\alpha - \gamma}{1 - \frac{d}{2}} t^{1 - \frac{d}{2}} \right)}{\delta t} - \frac{\exp \left( \frac{\alpha - \gamma}{1 - \frac{d}{2}} t_2^{1 - \frac{d}{2}} \right)}{\delta t_2} \right) \\
    = \ &C \frac{\exp \left( \frac{\alpha - \gamma}{1 - \frac{d}{2}} t^{1 - \frac{d}{2}} \right)}{\delta t} - C_2,
\end{align*}
where $C_2 = C \frac{\exp \left( \frac{\alpha - \gamma}{1 - \frac{d}{2}} t_2^{1 - \frac{d}{2}} \right)}{\delta t_2}$.

\subsection{Finalizing the estimates}

Let us return to
\begin{align*}
    E(t) \ \leq \ &\frac{\| x^* \|^2}{2 \Gamma(t)} \int_{t_1}^t \Gamma(s) g(s) ds + \frac{\Gamma(t_1) E(t_1)}{\Gamma(t)} \ = \ \frac{\| x^* \|^2}{2 \Gamma(t)} \int_{t_1}^{t_2} \Gamma(s) g(s) ds + \frac{\| x^* \|^2}{2 \Gamma(t)} \int_{t_2}^t \Gamma(s) g(s) ds \\
    &+ \ \frac{\Gamma(t_1) E(t_1)}{\Gamma(t)} \ \leq \ \frac{\| x^* \|^2}{2} \left( \frac{1}{\Gamma(t)} \int_{t_1}^{t_2} \Gamma(s) g(s) ds - \frac{C_2}{\Gamma(t)} + C \frac{\exp \left( \frac{\alpha - \gamma}{1 - \frac{d}{2}} t^{1 - \frac{d}{2}} \right)}{\delta t \Gamma(t)} \right) + \frac{\Gamma(t_1) E(t_1)}{\Gamma(t)}.
\end{align*}
This expression converges to zero at a speed of the slowest decaying term (all the other decay exponentially):
\[
C \frac{\exp \left( \frac{\alpha - \gamma}{1 - \frac{d}{2}} t^{1 - \frac{d}{2}} \right)}{\delta t \Gamma(t)} \ = \ C \frac{\exp \left( \frac{\alpha - \gamma}{1 - \frac{d}{2}} t^{1 - \frac{d}{2}} \right)}{\delta C t^{\frac{d}{2}+1} \exp \left( \frac{\alpha - \gamma}{1 - \frac{d}{2}} t^{1 - \frac{d}{2}} \right)} \ = \ \frac{1}{\delta t^{\frac{d}{2}+1}}.
\]
Thus, there exists a constant $C_3 > 0$ such that for all $t \geq t_2$
\[
E(t) \ \leq \ \frac{C_3}{t^{\frac{d}{2}+1}}.
\]

\subsection{The rates themselves}

Now we can deduce the actual rates for the quantities in Theorem \ref{CR}. For all $t \geq t_2$
\[
\Phi_{\lambda(t)} (x(t)) - \Phi^* \ \leq \ \frac{C_3}{t^{\frac{d}{2}+1}} + \frac{1}{2t^d} \| x^* \|^2,
\]
\[
\Phi \left( \prox\nolimits_{\lambda(t) \Phi}(x(t)) \right) - \Phi^* \ \leq \ \frac{C_3}{t^{\frac{d}{2}+1}} + \frac{1}{2t^d} \| x^* \|^2,
\]
\[
\| \prox\nolimits_{\lambda(t) \Phi} (x(t)) - x(t) \|^2 \ \leq \ 2 C_3 t^{l - \frac{d}{2} - 1} + t^{l-d} \| x^* \|^2
\]
and
\[
\| x(t) - x_{\varepsilon(t), \lambda(t)} \|^2 \ \leq \ \frac{C_3}{t^{1-\frac{d}{2}}}.
\]
Again, there exist constants $C_4, C_5 > 0$ such that for all $t \geq t_2$
\[
\Phi_{\lambda(t)} (x(t)) - \Phi^* \ \leq \ \frac{C_4}{t^d},
\]
\[
\Phi \left( \prox\nolimits_{\lambda(t) \Phi}(x(t)) \right) - \Phi^* \ \leq \ \frac{C_4}{t^d},
\]
\[
\| \prox\nolimits_{\lambda(t) \Phi} (x(t)) - x(t) \|^2 \ \leq \ \frac{C_5}{t^{\frac{d}{2} + 1 - l}}
\]
and
\[
\| x(t) - x_{\varepsilon(t), \lambda(t)} \|^2 \ \leq \ \frac{C_3}{t^{1-\frac{d}{2}}}.
\]

\section{The rates of convergence of the function values in case $d=2$}

This particular case is of a great interest, as it is in a way a bordering case, when one cannot show the strong convergence of the trajectories, but still can show the fast convergence of the values. In this case the functions $\mu$ and $\Gamma$ are
\[
\mu(t) \ = \ \frac{\alpha - \gamma + 1}{t}
\]
and
\[
\Gamma(t) \ = \ \exp \left( \int_{t_1}^t \frac{\alpha - \gamma + 1}{s} ds \right) \ = \ \left( \frac{t}{t_1} \right)^{\alpha - \gamma + 1} \ = \ C t^{\alpha - \gamma + 1},
\]
where $C = \frac{1}{t_1^{\alpha - \gamma + 1}}$. The function $g$ is
\[
g(t) \ = \ \frac{l}{t^{5-l}} + \frac{2}{t^3} - \frac{\gamma \beta}{t^4} + \frac{C_1}{t^3} \ = \ \frac{l}{t^{5-l}} + \frac{2 + C_1}{t^3} - \frac{\gamma \beta}{t^4},
\]
where $C_1 = 4 \gamma (2a + c \gamma) (l + 1)^2$. Thus,
\[
\Gamma(t) g(t) \ = \ C \left( l t^{\alpha - \gamma + l - 4} + \left( 2 + C_1 \right) t^{\alpha - \gamma - 2} - \gamma \beta t^{\alpha - \gamma - 3} \right).
\]
So, 
\begin{align*}
    &C \int_{t_1}^t \left( l s^{\alpha - \gamma + l - 4} + \left( 2 + C_1 \right) s^{\alpha - \gamma - 2} - \gamma \beta s^{\alpha - \gamma - 3} \right) ds \\
    = \ &C \left( \frac{l s^{\alpha - \gamma + l - 3}}{\alpha - \gamma + l - 3} + \frac{\left( 2 + C_1 \right) s^{\alpha - \gamma - 1}}{\alpha - \gamma - 1} - \frac{\gamma \beta s^{\alpha - \gamma - 2}}{\alpha - \gamma - 2} \right)\Bigg|_{t_1}^t \\
    = \ &C \left( \frac{l t^{\alpha - \gamma + l - 3}}{\alpha - \gamma + l - 3} + \frac{\left( 2 + C_1 \right) t^{\alpha - \gamma - 1}}{\alpha - \gamma - 1} - \frac{\gamma \beta t^{\alpha - \gamma - 2}}{\alpha - \gamma - 2} \right) - C_2,
\end{align*}
where $C_2 = C \left( \frac{l t_1^{\alpha - \gamma + l - 3}}{\alpha - \gamma + l - 3} + \frac{\left( 2 + C_1 \right) t_1^{\alpha - \gamma - 1}}{\alpha - \gamma - 1} - \frac{\gamma \beta t_1^{\alpha - \gamma - 2}}{\alpha - \gamma - 2} \right)$. By Theorem 5 we have
\begin{align*}
    E(t) \ &\leq \ \frac{\| x^* \|^2}{2} \frac{C \left( \frac{l t^{\alpha - \gamma + l - 3}}{\alpha - \gamma + l - 3} + \frac{\left( 2 + C_1 \right) t^{\alpha - \gamma - 1}}{\alpha - \gamma - 1} - \frac{\gamma \beta t^{\alpha - \gamma - 2}}{\alpha - \gamma - 2} \right) - C_2}{C t^{\alpha - \gamma + 1}} + \frac{C t_1^{\alpha - \gamma + 1} E(t_1)}{C t^{\alpha - \gamma + 1}} \\
    &= \ \frac{\| x^* \|^2}{2} \frac{\frac{l t^{\alpha - \gamma + l - 3}}{\alpha - \gamma + l - 3} + \frac{\left( 2 + C_1 \right) t^{\alpha - \gamma - 1}}{\alpha - \gamma - 1} - \frac{\gamma \beta t^{\alpha - \gamma - 2}}{\alpha - \gamma - 2}}{t^{\alpha - \gamma + 1}} + \frac{C_3}{t^{\alpha - \gamma + 1}} \\
    &= \ \frac{\| x^* \|^2}{2} \left( \frac{l t^{l - 4}}{\alpha - \gamma + l - 3} + \frac{\left( 2 + C_1 \right) t^{-2}}{\alpha - \gamma - 1} - \frac{\gamma \beta t^{-3}}{\alpha - \gamma - 2} \right) + \frac{C_3}{t^{\alpha - \gamma + 1}}
\end{align*}
where $C_3 = \frac{2C t_1^{\alpha - \gamma + 1} E(t_1) - C_2 \| x^* \|^2}{2C}$. We know that $\frac{\alpha}{2} \leq \gamma < \alpha$ and $0 \leq l < 2$. Thus, in the brackets the term with $t^{-2}$ is dominating, as $t \to +\infty$. Moreover, $\alpha - \gamma + 1 > 1$. So, the behaviour of the entire expression depends on the value of $\alpha$. There exists a constant $C_4$ such that for all $t \geq t_1$
\[
E(t) \ \leq \ \frac{C_4}{t^2} + \frac{C_3}{t^{\alpha - \gamma + 1}}.
\]
That leads us to the following rates for all $t \geq t_1$
\[
\Phi_{\lambda(t)} (x(t)) - \Phi^* \ \leq \ \frac{C_4}{t^2} + \frac{C_3}{t^{\alpha - \gamma + 1}} + \frac{\| x^* \|^2}{2t^2},
\]
\[
\Phi \left( \prox\nolimits_{\lambda(t) \Phi}(x(t)) \right) - \Phi^* \ \leq \ \frac{C_4}{t^2} + \frac{C_3}{t^{\alpha - \gamma + 1}} + \frac{\| x^* \|^2}{2t^2},
\]
\[
\| \prox\nolimits_{\lambda(t) \Phi} (x(t)) - x(t) \|^2 \ \leq \ \frac{2C_4}{t^{2-l}} + \frac{2C_3}{t^{\alpha - \gamma - l + 1}} + \frac{\| x^* \|^2}{t^{2-l}}
\]
and
\[
\| x(t) - x_{\varepsilon(t), \lambda(t)} \|^2 \ \leq \ 2C_4 + \frac{2C_3}{t^{\alpha - \gamma - 1}}.
\]
As we can see, the strong convergence of the trajectories can no longer be shown. Nevertheless, for $C_5 = \frac{2C_4 + \| x^* \|^2}{2}$ we deduce for all $t \geq t_1$
\[
\Phi_{\lambda(t)} (x(t)) - \Phi^* \ \leq \ \frac{C_5}{t^2} + \frac{C_3}{t^{\alpha - \gamma + 1}},
\]
\[
\Phi \left( \prox\nolimits_{\lambda(t) \Phi}(x(t)) \right) - \Phi^* \ \leq \ \frac{C_5}{t^2} + \frac{C_3}{t^{\alpha - \gamma + 1}} 
\]
and
\[
\| \prox\nolimits_{\lambda(t) \Phi} (x(t)) - x(t) \|^2 \ \leq \ \frac{2C_5}{t^{2-l}} + \frac{2C_3}{t^{\alpha - \gamma - l + 1}}.
\]
Since we are free to choose $\gamma$ such that $\frac{\alpha}{2} \leq \gamma < \alpha$, and since we want to have as fast rates as possible, we should take $\gamma = \frac{\alpha}{2}$.
\[
\Phi_{\lambda(t)} (x(t)) - \Phi^* \ \leq \ \frac{C_5}{t^2} + \frac{C_3}{t^{\frac{\alpha}{2} + 1}},
\]
\[
\Phi \left( \prox\nolimits_{\lambda(t) \Phi}(x(t)) \right) - \Phi^* \ \leq \ \frac{C_5}{t^2} + \frac{C_3}{t^{\frac{\alpha}{2} + 1}} 
\]
and
\[
\| \prox\nolimits_{\lambda(t) \Phi} (x(t)) - x(t) \|^2 \ \leq \ \frac{2C_5}{t^{2-l}} + \frac{2C_3}{t^{\frac{\alpha}{2} - l + 1}}.
\]
Here we have to consider several cases.
\begin{enumerate}

    \item If $0 < \alpha < 2$, then there exists $C_6$ such that for all $t \geq t_1$
    \[
    \Phi_{\lambda(t)} (x(t)) - \Phi^* \ \leq \ \frac{C_6}{t^{\frac{\alpha}{2} + 1}},
    \]
    \[
    \Phi \left( \prox\nolimits_{\lambda(t) \Phi}(x(t)) \right) - \Phi^* \ \leq \ \frac{C_6}{t^{\frac{\alpha}{2} + 1}} 
    \]
    and
    \[
    \| \prox\nolimits_{\lambda(t) \Phi} (x(t)) - x(t) \|^2 \ \leq \ \frac{2C_6}{t^{\frac{\alpha}{2} - l + 1}}.
    \]

    \item If $\alpha \geq 2$, then there exists $C_6$ such that for all $t \geq t_1$
    \[
    \Phi_{\lambda(t)} (x(t)) - \Phi^* \ \leq \ \frac{C_6}{t^2},
    \]
    \[
    \Phi \left( \prox\nolimits_{\lambda(t) \Phi}(x(t)) \right) - \Phi^* \ \leq \ \frac{C_6}{t^2} 
    \]
    and
    \[
    \| \prox\nolimits_{\lambda(t) \Phi} (x(t)) - x(t) \|^2 \ \leq \ \frac{2C_6}{t^{2-l}}.
    \]
    
\end{enumerate}

\begin{remark}

Probably, it is possible to show the weak convergence of the trajectories to a minimizer of the objective function in case $d=2$. 
    
\end{remark}

\section{Numerical examples}

\subsection{The rates of convergence of the Moreau envelope values}

Let us consider the following objective function $\Phi: \mathbb{R} \to \mathbb{R}$, $\Phi(x) = |x| + \frac{x^2}{2}$ and plot the values of its Moreau envelope for different polynomial functions $\lambda$ and $\varepsilon$ in order to illustrate the theoretical results with some numerical examples. We set $\lambda(t) = t^l$ and $\varepsilon(t) = \frac{1}{t^d}$ with $x(t_0) = x_0 = 10$, $\dot x(t_0) = 0$, $\alpha = 10$, $\beta = 1$ and $t_0 = 1$.

Consider different Moreau envelope parameters $\lambda$ with $d = 1.9$:

\begin{figure}[H]
    \centering
    \includegraphics[width=\textwidth]{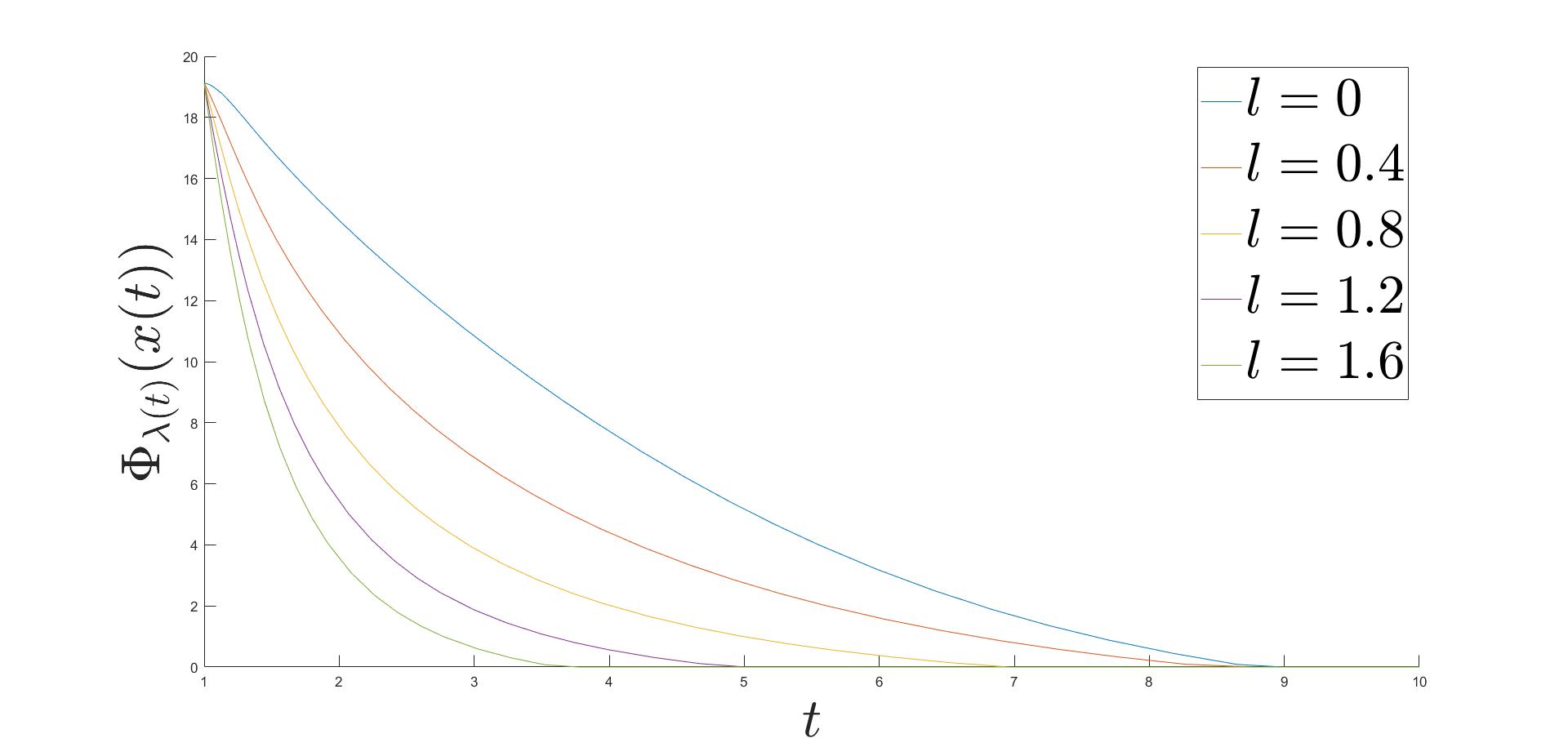}
    \caption{Moreau envelope values for $d = 1.9$}
\end{figure}

We notice that a faster growing function $\lambda$ implies faster convergence of the Moreau envelope of the objective function $\Phi$.

Increasing the speed of decay of the Tikhonov function $\varepsilon$ for a fixed $l = 1$ accelerates the convergence of the Moreau envelope values, which was predicted by the theory:

\begin{figure}[H]
    \centering
    \includegraphics[width=\textwidth]{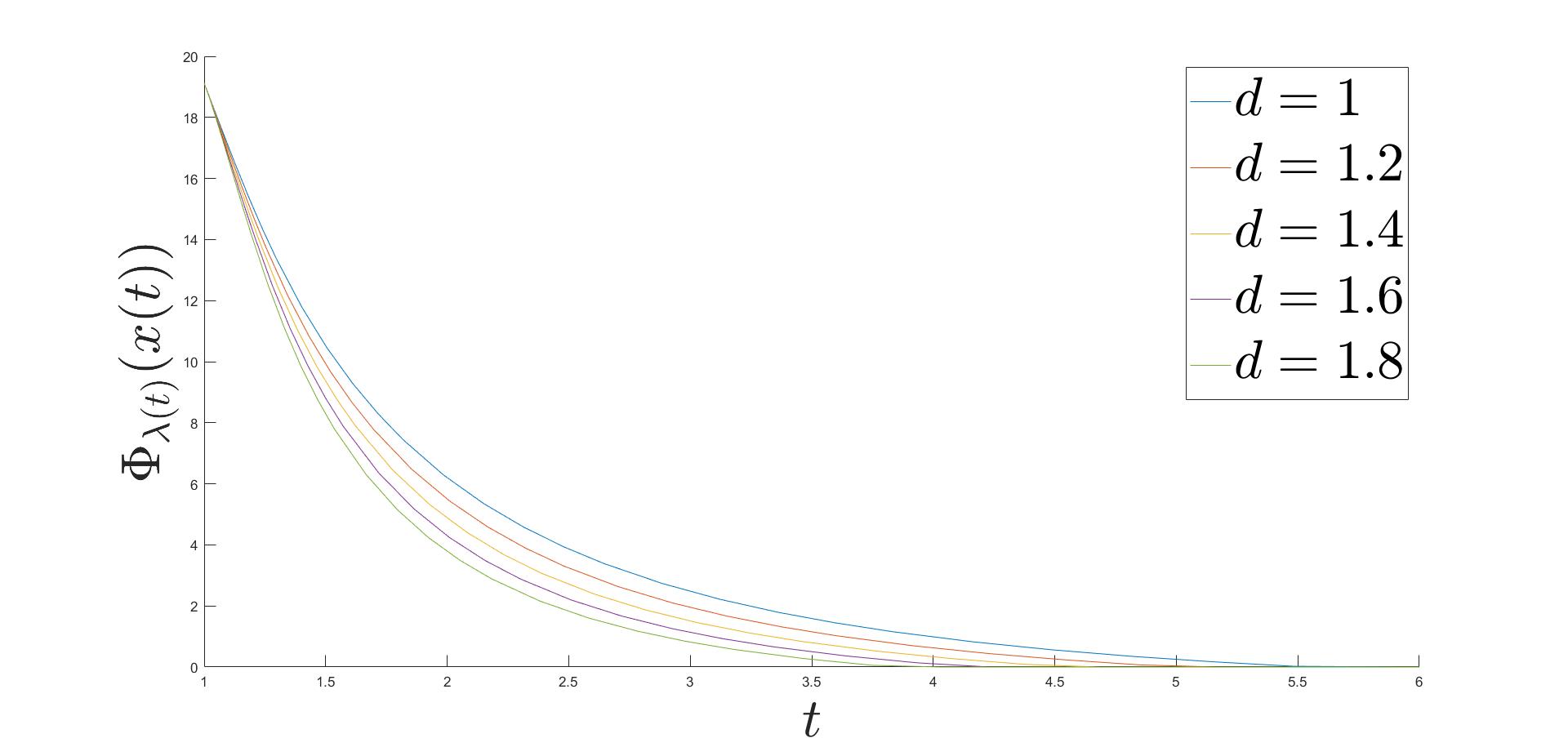}
    \caption{Moreau envelope values for $l = 1$}
\end{figure}

\subsection{Strong convergence of the trajectories}

For the different objective function let us investigate the strong convergence of the trajectories of \eqref{Syst_0} and show some examples when the trajectories actually diverge due to one of the key assumptions of the analysis not being fulfilled. We define
\begin{equation*}
    \Phi(x) \ = \ 
    \begin{cases}
        &|x - 1|, \ x > 1 \\
        &0, \ x \in [-1, 1] \\
        &|x + 1|, \ x < -1.
    \end{cases}
\end{equation*}
The set $\argmin \Phi$ is the segment $[-1, 1]$ and clearly $0$ is its element of the minimal norm. Let us investigate the unfluence of the Tikhonov term on the behaviour of the trajectories of the system for $\lambda(t) = t$.

\begin{figure}[H]
     \centering
     \begin{subfigure}[b]{0.49\textwidth}
         \centering
         \includegraphics[width=\textwidth]{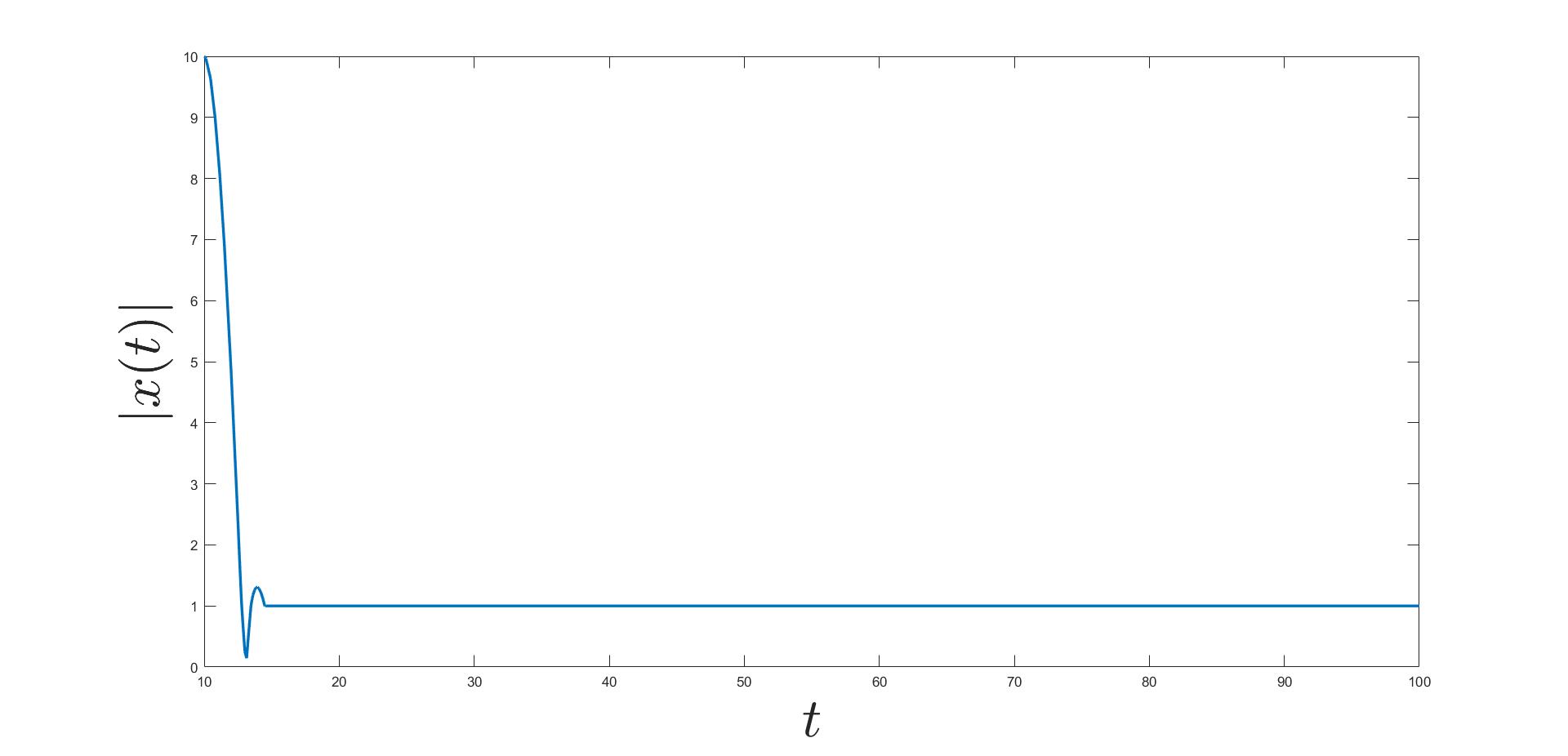}
         \caption{$\varepsilon(t) = 0$}
     \end{subfigure}
     \hfill
     \begin{subfigure}[b]{0.49\textwidth}
         \centering
         \includegraphics[width=\textwidth]{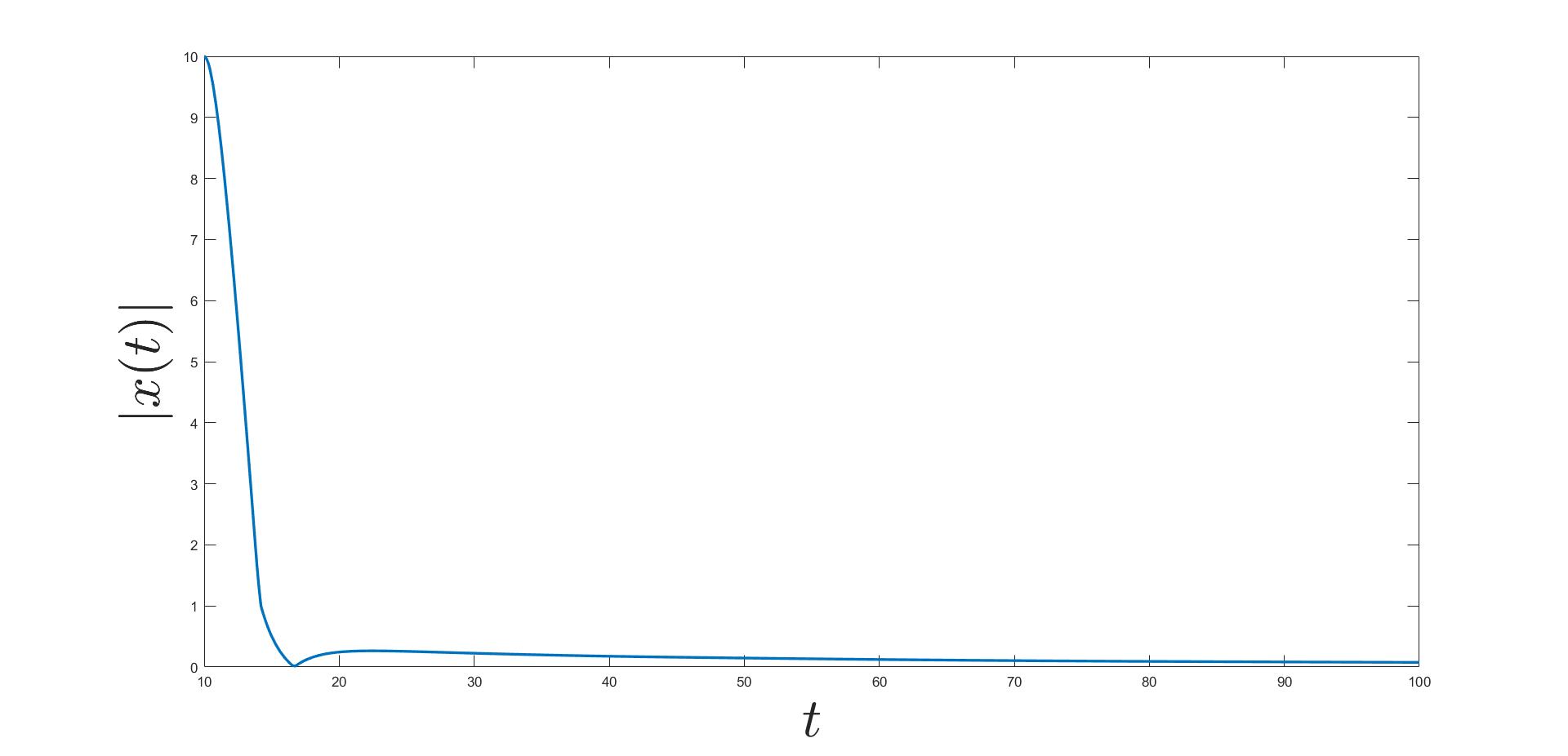}
         \caption{$\varepsilon(t) = \frac{1}{t^\frac{3}{2}}$}
     \end{subfigure}
     \hfill
     \caption{The role of Tikhonov term.}
\end{figure}

As we can see in case Tikhonov function is missing the trajectories converge to a minimizer $1$ of $\Phi$, however, Tikhonov term ensures the convergence to the minimal norm solution $0$.

Finally, for the same choice of $\lambda$ and $\Phi$ let us take different Tikhonov functions to study their effect on the trajectories of \eqref{Syst_0}. For this purpose we increase the starting point to $x(t_0) = 100$.

\begin{figure}[H]
    \includegraphics[width=\textwidth]{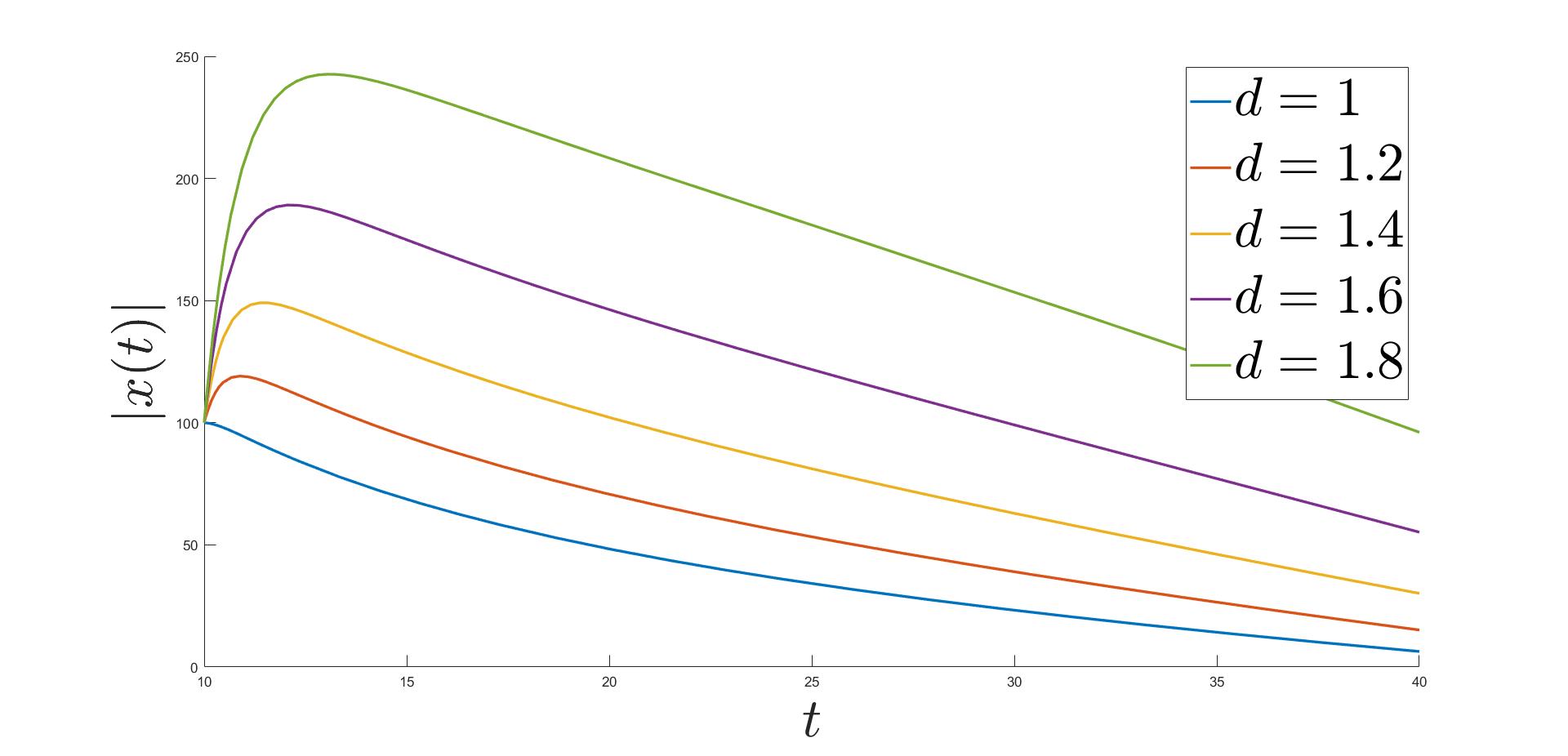}
    \caption{$l = 1$.}
\end{figure}

As we see, the faster $\varepsilon$ decays, the slower trajectories converge, which totally corresponds to the theoretical results.

To end this section let us break some of the fundamental conditions of our analysis and show that there is no convergence of the trajectories in this case.

\begin{figure}[H]
    \centering
    \includegraphics[width=\textwidth]{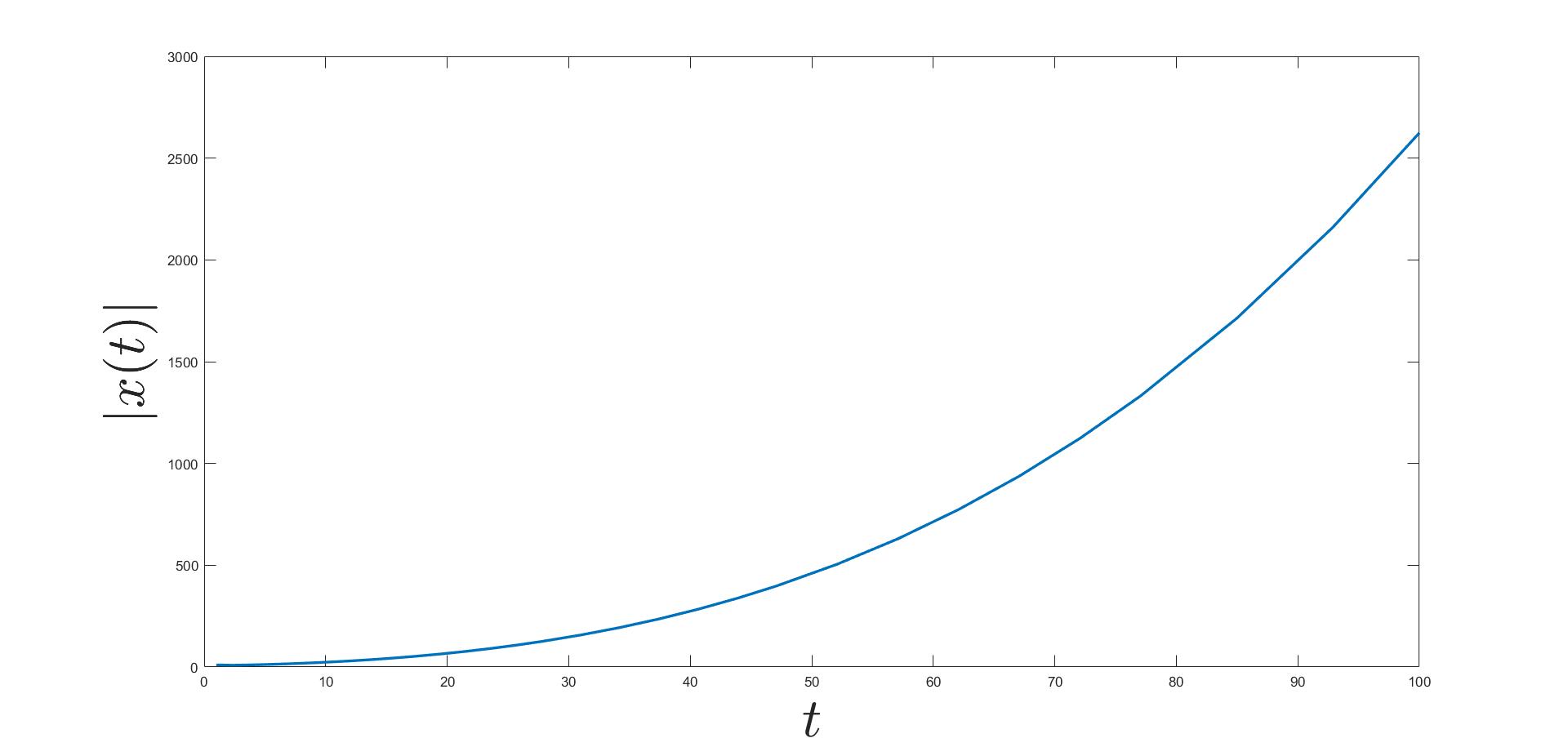}
    \caption{$l$ and $d$ do not meet the requirements.}
\end{figure}

The author is immensely grateful to Professor R.I. Bo\c t for valuable comments and fruitful discussions, which significantly improved the quality of this manuscript.

\section*{Appendix}

\begin{lemma}\label{O}

Let $S$ be a non-empty subset of a real Hilbert space $H$ and $x: [0, +\infty) \mapsto H$ a given map. Assume that
\begin{itemize}

\item for every $z \in S$, $\lim_{t \to +\infty} \| x(t) - z \|$ exists;

\item every weak sequential cluster point of the map $x$ belongs to $S$.

\end{itemize}
Then $x(t)$ converges weakly to some element of $S$ as $t \to +\infty$.

\end{lemma}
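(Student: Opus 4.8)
The plan is to prove this classical Opial-type lemma by first extracting boundedness, then establishing that the bounded map admits weak sequential cluster points all lying in $S$, and finally upgrading ``at least one cluster point'' to ``exactly one'', whence weak convergence follows.

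First I would fix any $z_0 \in S$ (possible since $S \neq \emptyset$). By the first hypothesis $\lim_{t \to +\infty} \| x(t) - z_0 \|$ exists and is finite, so $t \mapsto \| x(t) - z_0 \|$ is bounded and hence $x$ is bounded on $[0,+\infty)$. Since $H$ is a Hilbert space, in particular reflexive, every sequence $\big( x(t_n) \big)$ with $t_n \to +\infty$ has a weakly convergent subsequence, so the set of weak sequential cluster points of $x$ is non-empty; by the second hypothesis it is contained in $S$.

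The heart of the argument, and the step I expect to be the main obstacle, is proving that this cluster set is a singleton. Suppose $x_1$ and $x_2$ are two weak sequential cluster points, say $x(t_n) \rightharpoonup x_1$ and $x(s_n) \rightharpoonup x_2$ with $t_n, s_n \to +\infty$; both lie in $S$. The key observation is the algebraic identity
\[
\| x(t) - x_1 \|^2 - \| x(t) - x_2 \|^2 \ = \ 2 \langle x(t), x_2 - x_1 \rangle + \| x_1 \|^2 - \| x_2 \|^2.
\]
Because $x_1, x_2 \in S$, the first hypothesis guarantees that both $\| x(t) - x_1 \|^2$ and $\| x(t) - x_2 \|^2$ converge as $t \to +\infty$; hence the left-hand side converges, and therefore $\langle x(t), x_2 - x_1 \rangle$ admits a limit $\ell \in \mathbb{R}$ as $t \to +\infty$. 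Evaluating this limit along $t_n$ and using $x(t_n) \rightharpoonup x_1$ gives $\ell = \langle x_1, x_2 - x_1 \rangle$, while evaluating it along $s_n$ and using $x(s_n) \rightharpoonup x_2$ gives $\ell = \langle x_2, x_2 - x_1 \rangle$. Subtracting the two expressions yields $\langle x_2 - x_1, x_2 - x_1 \rangle = 0$, that is $x_1 = x_2$.

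Finally I would conclude weak convergence from the uniqueness of the cluster point together with boundedness. Denote by $\bar x$ the unique weak sequential cluster point. If $x(t)$ did not converge weakly to $\bar x$, there would exist $\phi \in H$, $\eta > 0$ and a sequence $\tau_n \to +\infty$ with $|\langle x(\tau_n) - \bar x, \phi \rangle| \geq \eta$; but $\big( x(\tau_n) \big)$ is bounded, so it has a subsequence converging weakly to some limit, which must coincide with $\bar x$ by uniqueness, contradicting the lower bound on the inner product. Hence $x(t) \rightharpoonup \bar x \in S$ as $t \to +\infty$, which is the claim.
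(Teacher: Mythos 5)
Your proof is correct: this is the classical continuous-time Opial lemma, and your argument — boundedness from the first hypothesis, extraction of weak sequential cluster points, the identity $\| x(t) - x_1 \|^2 - \| x(t) - x_2 \|^2 = 2 \langle x(t), x_2 - x_1 \rangle + \| x_1 \|^2 - \| x_2 \|^2$ evaluated along the two subsequences to force uniqueness, and the contradiction argument to upgrade a unique cluster point to weak convergence — is exactly the canonical one. The paper states this lemma in its appendix without proof, so there is no in-paper argument to compare against; the only cosmetic remark is that, since $x$ is merely a map with no continuity assumed, the first hypothesis gives boundedness of $x$ only on $[T,+\infty)$ for some $T$ rather than on all of $[0,+\infty)$, which is all that your argument actually uses.
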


\end{document}